\newcommand{\N}{\mathbb{N} }
\newcommand{\C}{\mathbb{C} }
\theoremstyle{plain}
\newtheorem{theorem}{Theorem}[section]
\newtheorem{corollary}[theorem]{Corollary}
\theoremstyle{definition}
\newtheorem{definition}[theorem]{Definition}
\newtheorem{remark}[theorem]{Remark}
\newtheorem{example}[theorem]{Example}
\begin{document}
\title{RANK OF BICOMPLEX MATRICES AND SYSTEM OF ALGEBRAIC EQUATIONS}
	\author{Amita}
	\email{amitasharma234@gmail.com}
	\address{Department of Mathematics, Indira Gandhi National Tribal University, Amarkantak, Madhya Pradesh 484886, India}

	\author{Akhil~Prakash}
	\email{akhil.sharma140@gmail.com}
	\address{Department of Mathematics, Aligarh Muslim University, Aligarh, Uttar Pradesh 202002, India}

	\author{Mamta Amol Wagh}
	\address{Department of Mathematics, University of Delhi, New Delhi 110078, India}
	\email{mamtanigam@ddu.du.ac.in}

	\author{Suman Kumar}
	\address{Department of Mathematics, Indira Gandhi National Tribal University, Amarkantak, Madhya Pradesh 484886, India}
	\email{suman@igntu.ac.in}
	
	\keywords{Bicomplex Number, Bicomplex Matrix, Rank, Linear Transformation and System of Equations.}

	\subjclass[IMS] {Primary 15A03, 15A04, 15A24, 15A30; Secondary 30G35}
	\date{\today}
	
	\begin{abstract}
	In this paper, we study the rank of matrices of bicomplex numbers. The relationship between rank, idempotent column rank and idempotent row rank is examined. Then, the solution of a system of equations in bicomplex space is presented using a new technique. Moreover, we establish a necessary and sufficient condition for the existence of solutions of a system of equations in bicomplex space and derive some related results.
		\end{abstract}
	\maketitle
	
	\section*{Introduction} 
	The theory of bicomplex numbers represents a captivating area of mathematical inquiry, offering a deeper understanding of number systems and their applications across different disciplines. The book of Price \cite{price2018}  offers an unparalleled and exhaustive examination of bicomplex numbers, while Segre [2] laid the groundwork for subsequent investigations into the properties and applications of bicomplex numbers. For the recent research in this area, we refer the reader to (\cite{alpay2014basics,  futagawa1928, futagawa1932, riley1953, ringleb1933,  rochon2004, srivastava2003, srivastava2008, anjali2025}).

\vspace{1.5mm}

This paper utilizes the following approach: in section 1, we summarize some basics of bicomplex numbers, bicomplex linear transformation, and some necessary results which are used in this research. Section 2 deals with the rank of a bicomplex matrix and some related results. Also, it focuses on finding the solution of a system of bicomplex linear equations.

\section{Preliminaries and Notations}
	\begin{definition}\label{21}\noindent{(\bf Bicomplex numbers)} 
		The set of bicomplex numbers $\C_2$ is defined as follows
		\begin{eqnarray*}
			\mathbb{C}_{2} &= & \{{u}_{1}  + {i}_{1} {u}_{2}  + {i}_{2} {u}_{3}  + {i}_{1} {i}_{2} {u}_{4}\;:\; {u}_{1} , {u}_{2} , {u}_{3} , {u}_{4} \in\mathbb{C}_{0}\},\end{eqnarray*}
		where $i_1$ and $i_2$ are unit vectors such that $i_1^2,i_2^2=-1$. Alternatively the set of bicomplex numbers is defined as follows
		\begin{eqnarray*}
			\mathbb{C}_{2} &= & \{{z}_{1} + {i}_{2} {z}_{2}\;:\;  {z}_{1}, {z}_{2} \in \mathbb{C}_{1}\},
		\end{eqnarray*}
			where $z_1, z_2$ are $i_1$ complex numbers and $i_1^2=-1$.
		For convenience, we used the symbols $\C_1,\C_0$  for the set of all complex numbers and the set of all real numbers respectively. In $\C_2$, there exist precisely four idempotent elements  $0,1,e_1,e_2$ in which $e_1,e_2$ are two nontrivial idempotent elements given as
		\[ e_{1} = \frac{(1 + i_{1} i_{2})}{2}\quad  \mbox{and} \quad e_{2} = \frac{(1 - i_{1} i_{2})}{2}.\]
		The numbers $e_1$ and $e_2$ serve as zero divisors, and they are linearly independent in the space $\mathbb{C}_2(\mathbb{C}_1)$. The linear independence of these elements over $\mathbb{C}_1$ gives rise to a new representation of all bicomplex numbers, known as the idempotent representation. Furthermore, for any element $\xi = {z}_{1}+ {i}_{2} {z}_{2}$ in $\C_2$, idempotent representation is defined as
		\[
		\xi =  (z_{1} -  i_{1} z_{2}) e_{1} + (z_{1}  +  i_{1} z_{2}) e_{2},
		\]
		where complex numbers $(z_{1} -  i_{1} z_{2})$ and $(z_{1} +  i_{1} z_{2})$ are called idempotent components of $\xi$ and denoted by $1_\xi$ and $2_\xi$ respectively. Thus, $\xi = {z}_{1}+ {i}_{2} {z}_{2}$ can also be written as $\xi = 1_\xi e_{1} + 2_\xi e_{2}$, where $1_\xi  = z_{1} - i_{1} z_{2}$ and  $2_\xi = z_{1}  + i_{1} z_{2}$.
    	\end{definition}
    
	\begin{definition}\label{22}
	\noindent{(\bf Cartesian product)} \cite{anjali2023matrix}
	The $n$-times Cartesian product of $\C_2$ is represented by $\C_2^n$ and defined as 
	\[
	{\C}_{2}^{n} =  \{(\xi_{1}, \xi_{2},\ldots,\xi_{n}) ; \;\xi_{i} \in \C_2; \;  i = 1,2,\ldots,n\}.
	\]
	Furthermore, by utilizing Idempotent representation of an element in $\C_2$, each element of $\C_2^n$ can also be uniquely expressed as
	\[
	(\xi_{1}, \xi_{2},\ldots,\xi_{n}) = (^1{\xi}_1,^1{\xi}_2, \cdots, ^1{\xi}_n)e_{1}  + (^2{\xi}_1,^2{\xi}_2, \cdots, ^2{\xi}_n)e_{2}
	\]
	such that $(^1{\xi}_1,^1{\xi}_2, \cdots, ^1{\xi}_n)$ and $(^2{\xi}_1,^2{\xi}_2, \cdots, ^2{\xi}_n)$ are  $n$-tuples of complex numbers from the space $\C_1^n$. For $(\xi_{1}, \xi_{2},\ldots,\xi_{n})$, $(\eta_{1}, \eta_{2},\ldots,\eta_{n})\in \C_2^n$ and $\kappa \in \C_2$, we have the following
	\begin{itemize}
		\item 
		$(\xi_{1}, \xi_{2},\ldots,\xi_{n})=(\eta_{1}, \eta_{2},\ldots, \eta_{n}) \Longleftrightarrow  {^1{\xi}_i} = {^1{\eta}_i}\;\& \;\; {^2{\xi}_i} = {^2{\eta}_i}\;\; \forall \; i$.
		\item 
		$(\xi_{1},\xi_{2},\ldots,\xi_{n})\cdot (\eta_{1}, \eta_{2},\ldots, \eta_{n})= (\xi_{1} \eta_{1}, \xi_{2} \eta_{2},\ldots, \xi_{n} \eta_{n})$.
		\item 
		$\kappa \cdot (\xi_{1},\xi_{2},\ldots,\xi_{n})= (\xi_{1},\xi_{2},\ldots,\xi_{n})  \cdot \kappa= (\kappa \xi_{1},\kappa \xi_{2},\ldots,\kappa \xi_{n})$.
	\end{itemize}
		Consequently, the product
	\begin{eqnarray*}
		(\xi_1,\xi_2,\ldots,\xi_n)e_i=(\xi_1e_i,\xi_2e_i,\ldots,\xi_n e_i)=(^i{\xi}_1e_i,^i{\xi}_2e_i,\ldots,^i{\xi}_n e_i) ; i=1,2.
	\end{eqnarray*}
	This conclusion arises from the fact that $\C_2^n$ is not only a $\C_2$ module but also exhibits the structure of a $\C_1$-algebra.
	\end{definition}

	\begin{definition}\noindent({\bf Bicomplex matrix}\label{23}) \cite{Amita2018}
		The set of bicomplex matrices of order $m\times n$ is denoted by $\C_2^{m\times n}$ and defined as
		\begin{eqnarray*}
			\C_2^{m\times n}=\left\{[\xi_{ij}]_{m\times n}:\xi_{ij}\in \C_2;1\leq i \leq m, 1\leq j \leq n \right\}.
		\end{eqnarray*}
		 Similar to the Idempotent representation of a bicomplex number, every bicomplex matrix can be uniquely decomposed as
		\[ 
		[\xi_{ij}]_{m\times n}=[^1{\xi}_{ij}]_{m\times n}e_1 + [^2{\xi}_{ij}]_{m\times n}e_2; 1\leq i \leq m, 1\leq j \leq n,  
		\]
		$i.e$. if $A=[\xi_{ij}]_{m\times n}$, then $A={^1{A}}e_1+{^2{A}}e_2$, where ${^1{A}}=[^1{\xi}_{ij}]_{m\times n}$ and ${^2{A}}=[^2{\xi}_{ij}]_{m\times n}$ are the complex matrices of order $m\times n$. The set $\C_2^{n\times n}$, under standard addition, multiplication, and scalar multiplication, forms an algebra over the field $\C_1$. 
	\end{definition}
\begin{definition}{\bf(Row rank of a matrix)}  \cite{wagh2024rank} 
If $A=[\xi_{ij}]_{m\times n} \in \C_2^{m \times n}$, then the row space of $A$ is defined as the subspace of vector-space $\C_2^n(\C_1)$ spanned by the rows of $A$ and the row rank of $A$ is denoted by $\rho_r(A)$ and defined as the dimension of row space of $A$.  
\end{definition} 
\begin{definition}{\bf(Column rank of a matrix)} \cite{wagh2024rank} 
If $A=[\xi_{ij}]_{m\times n} \in \C_2^{m \times n}$, then the column space of $A$ is defined as the subspace of vector-space $\C_2^m(\C_1)$ spanned by the columns of $A$ and the column rank of $A$ is denoted by $\rho_c(A)$ and defined as the dimension of column space of $A$.
\end{definition}
\begin{definition}({\bf Idempotent row rank of a matrix}) \cite{wagh2024rank} 
If $A = [\xi_{ij}]_{m\times n}\in \C_2^{m\times n}$, then the Idempotent row space of $A$ is defined as
\begin{eqnarray*}
\mbox{Idempotent row space of}\ A &=& (\mbox{row space of}\  {^1{A}}) e_1  + (\mbox{row space of}\  {^2{A}}) e_2\\
&=& \left\{  \left(\sum_{k=1}^{m}\alpha_k(^1{\xi}_{k1},^1{\xi}_{k2},\ldots,^1{\xi}_{kn})\right)e_1 \right. \\
&&\left.   +\left(\sum_{k=1}^{m}\beta_k(^2{\xi}_{k1},^2{\xi}_{k2},\ldots,^2{\xi}_{kn})\right)e_2;\ \alpha_k,\beta_k\in \C_1 \right\}.
\end{eqnarray*}
The Idempotent row rank of $A$ is denoted by $\rho_{ir}(A)$ and defined as the dimension of Idempotent row space of $A$.
\end{definition}

\begin{definition}({\bf Idempotent column rank of a matrix})\label{27} \cite{wagh2024rank} 
If $A = [\xi_{ij}]_{m\times n}\in \C_2^{m\times n}$, then the Idempotent column space of $A$ is defined as  
\begin{eqnarray*}
		\mbox{Idempotent column space of}\ A&=&(\mbox{column space of}\ {^1{A}}) e_1  + (\mbox{column space of}\  {^2{A}}) e_2\\
		&=& \left\{  \left(\sum_{k=1}^{n}\alpha_k(^1{\xi}_{1k},^1{\xi}_{2k}, \cdots, ^1{\xi}_{mk})\right)e_1 \right. \\
		&&\left.   +\left(\sum_{k=1}^{n}\beta_k(^2{\xi}_{1k},^2{\xi}_{2k}, \cdots, ^2{\xi}_{mk})\right)e_2;\ \alpha_k,\beta_k\in \C_1 \right\}.
	\end{eqnarray*}
	The Idempotent column rank of $A$ is denoted by $\rho_{ic}(A)$ and defined as the dimension of Idempotent column space of $A$.
\end{definition}
\begin{theorem} \cite{wagh2024rank} 
If $A=[\xi_{ij}]_{m\times n}\in \C_2^{m\times n}$, then row space of \ $A \subseteq$ Idempotent row space of $A$.
\end{theorem}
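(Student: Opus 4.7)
The plan is to unpack the definitions and show that every element of the row space can be rewritten in the form required by the idempotent row space, essentially by taking the two coefficient families $\alpha_k$ and $\beta_k$ to be equal.

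First, I would take an arbitrary element $v$ of the row space of $A$, so that
\[
v \;=\; \sum_{k=1}^{m} \lambda_k (\xi_{k1}, \xi_{k2}, \ldots, \xi_{kn})
\]
for some scalars $\lambda_k \in \C_1$. Next, using the idempotent decomposition of the entries, each row can be written as
\[
(\xi_{k1}, \ldots, \xi_{kn}) \;=\; (^1\xi_{k1}, \ldots, ^1\xi_{kn})\, e_1 \;+\; (^2\xi_{k1}, \ldots, ^2\xi_{kn})\, e_2,
\]
by the definition of the idempotent representation on $\C_2^n$ recalled in Definition \ref{22}.

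Then, I would invoke the $\C_1$-algebra structure of $\C_2^n$, noted at the end of Definition \ref{22}, which allows the scalars $\lambda_k \in \C_1$ to distribute across the two idempotent components. Collecting terms yields
\[
v \;=\; \left(\sum_{k=1}^{m} \lambda_k (^1\xi_{k1}, \ldots, ^1\xi_{kn})\right) e_1 \;+\; \left(\sum_{k=1}^{m} \lambda_k (^2\xi_{k1}, \ldots, ^2\xi_{kn})\right) e_2.
\]
This is exactly of the form that defines an element of the idempotent row space of $A$, obtained by taking $\alpha_k = \beta_k = \lambda_k$ for every $k$. Therefore $v$ lies in the idempotent row space, and the inclusion follows.

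There is no real obstacle here; the only subtlety worth flagging is ensuring that the scalars $\lambda_k$ truly commute past the idempotents $e_1, e_2$, which is justified by the $\C_1$-algebra structure on $\C_2^n$. The proof is essentially definitional, and the inclusion is generally strict since in the idempotent row space one is allowed to choose $\alpha_k$ and $\beta_k$ independently, whereas the row space forces $\alpha_k = \beta_k$.
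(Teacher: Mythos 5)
Your argument is correct: it is the straightforward definitional one (write an element of the row space as a $\C_1$-combination of rows, split each row into its idempotent components, distribute the scalars past $e_1,e_2$, and recognize the result as an element of the idempotent row space with $\alpha_k=\beta_k=\lambda_k$), and your closing remark that the containment is typically strict because the idempotent row space decouples the two coefficient families is apt. Note that the paper itself states this theorem only as a citation to \cite{wagh2024rank} and gives no proof, so there is no in-paper argument to compare against; your proof fills that gap correctly.
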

\begin{theorem}\label{2120}\cite{wagh2024rank} 
If $A=[\xi_{ij}]_{m\times n}\in \C_2^{m\times n}$, then 
column  space  of  $A  \subseteq$ Idempotent column space $A$.
\end{theorem}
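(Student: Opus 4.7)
The plan is to unpack both sides of the claimed inclusion in terms of their defining spanning sets, then directly realize an arbitrary element of the left side as an element of the right side by choosing the free parameters suitably.

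First I would take an arbitrary vector $v$ in the column space of $A$. By definition, this means $v$ is a $\C_1$-linear combination of the columns of $A$, so I can write
\[
v \;=\; \sum_{k=1}^{n} c_k\,(\xi_{1k},\xi_{2k},\ldots,\xi_{mk}),\qquad c_k\in\C_1.
\]
Next I would invoke the idempotent decomposition of each entry (Definition \ref{21}) to rewrite every column of $A$ in the form
\[
(\xi_{1k},\xi_{2k},\ldots,\xi_{mk}) \;=\; (^1\xi_{1k},{}^1\xi_{2k},\ldots,{}^1\xi_{mk})\,e_1 \;+\; (^2\xi_{1k},{}^2\xi_{2k},\ldots,{}^2\xi_{mk})\,e_2,
\]
and then substitute this back into the expression for $v$.

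The key observation is that the scalars $c_k$ lie in $\C_1$ and hence commute with the idempotents $e_1,e_2\in\C_2$, and since $e_1,e_2$ are themselves elements of $\C_2$, the $\C_1$-module structure on $\C_2^m$ distributes over the idempotent decomposition. Collecting the $e_1$ and $e_2$ parts separately gives
\[
v \;=\; \Bigl(\sum_{k=1}^{n} c_k\,({}^1\xi_{1k},{}^1\xi_{2k},\ldots,{}^1\xi_{mk})\Bigr)e_1 \;+\; \Bigl(\sum_{k=1}^{n} c_k\,({}^2\xi_{1k},{}^2\xi_{2k},\ldots,{}^2\xi_{mk})\Bigr)e_2.
\]
I would then match this expression against Definition \ref{27} by setting $\alpha_k = c_k$ and $\beta_k = c_k$, which shows that $v$ belongs to the idempotent column space of $A$. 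Since $v$ was arbitrary, this establishes the desired inclusion.

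I do not expect any serious obstacle here; the proof is essentially a bookkeeping exercise in the idempotent decomposition. The only mild subtlety to state cleanly is that $\C_1$-scalars may be pushed past $e_1$ and $e_2$, and that the same scalar $c_k$ can play the role of both $\alpha_k$ and $\beta_k$ in the idempotent column space description, so that taking $\alpha_k=\beta_k$ corresponds precisely to the restriction imposed by $v$ coming from the (unified) column space rather than the larger idempotent column space.
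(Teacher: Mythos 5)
Your proof is correct and is exactly the expected argument: decompose each column of $A$ idempotently, push the $\C_1$-scalars $c_k$ past $e_1$ and $e_2$, regroup, and recognize the result as an element of the idempotent column space of Definition \ref{27} with $\alpha_k=\beta_k=c_k$. The paper itself states Theorem \ref{2120} without proof (citing \cite{wagh2024rank}), and your bookkeeping matches the style of the argument the paper does give for the closely related Theorem \ref{33}, so there is nothing substantive to distinguish.
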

\begin{theorem}\label{290} \cite{khanna2013}
Let ${T}: F^n\to$ $F^m$ be a linear transformation, where $F^n$ and $F^m$ be the vector spaces over the field $F$, and let ${\mathcal{B}}_{1}$ and ${\mathcal{B}}_{2}$ be the standard ordered bases of $F^{n}$ and $F^{m}$, respectively. Let the matrix representation of $T$ with respect to the ordered bases ${\mathcal{B}}_{1}$ and ${\mathcal{B}}_{2}$ be $A=[\xi_{ij}]_{m\times n} \in F^{m \times n}$, i.e. $A= [T]_{{{\mathcal{B}}_{1}}, {{\mathcal{B}}_{2}}}$ and $W$ be the subspace of $F^m$ such that  $W = \mbox{Column space of}\ A$, then $Range\left(T \right)=W$.
\end{theorem}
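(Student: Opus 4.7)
The plan is to establish the two inclusions $\text{Range}(T)\subseteq W$ and $W\subseteq \text{Range}(T)$, both of which unfold directly from the definition of the matrix $A=[T]_{\mathcal{B}_1,\mathcal{B}_2}$ once the standard bases are brought into play.

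First, I would fix $\mathcal{B}_1=\{e_1,\ldots,e_n\}$ and $\mathcal{B}_2=\{f_1,\ldots,f_m\}$ as the standard ordered bases of $F^n$ and $F^m$. By the construction of the matrix representation, the $j$-th column of $A$ consists of the $\mathcal{B}_2$-coordinates of $T(e_j)$; that is, $T(e_j)=\sum_{i=1}^{m}\xi_{ij}f_i$. Because $\mathcal{B}_2$ is the standard basis, this coordinate tuple coincides with the vector $T(e_j)\in F^m$ itself, so the columns of $A$, viewed as vectors in $F^m$, are exactly $T(e_1),\ldots,T(e_n)$.

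For the forward inclusion, pick any $y\in\text{Range}(T)$ and write $y=T(v)$ with $v=\sum_{j=1}^{n}c_j e_j$. Linearity gives
\begin{eqnarray*}
y=T(v)=\sum_{j=1}^{n}c_j T(e_j),
\end{eqnarray*}
which is a $\C_1$-linear combination of the columns of $A$, hence $y\in W$. Conversely, any $w\in W$ has the form $w=\sum_{j=1}^{n}c_j T(e_j)$; then $v=\sum_{j=1}^{n}c_j e_j\in F^n$ satisfies $T(v)=w$, proving $w\in\text{Range}(T)$. Combining both inclusions yields $\text{Range}(T)=W$.

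The only genuinely delicate point is the identification in the first step: the $j$-th column of $A$ is a priori a tuple of scalars (the $\mathcal{B}_2$-coordinates of $T(e_j)$), and equating it with the vector $T(e_j)\in F^m$ relies on $\mathcal{B}_2$ being the standard basis. Once that identification is made explicit, the remaining arguments reduce to a direct application of the linearity of $T$ and the fact that $\{e_1,\ldots,e_n\}$ spans $F^n$, so there is no serious obstacle.
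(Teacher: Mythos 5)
Your proof is correct and is the standard argument: the paper itself states this result only as a cited background fact from a linear-algebra text and gives no proof of its own, so there is nothing to diverge from; moreover, your structure (identify the columns of $A$ with $T(e_1),\dots,T(e_n)$ via the standard basis, then chase both inclusions by linearity) is exactly the template the authors later adapt in their Theorem 2.8 for the bicomplex case. The only blemish is a slip of notation: you call $y=\sum_j c_j T(e_j)$ a ``$\C_1$-linear combination,'' but the theorem is stated over an arbitrary field $F$, so the scalars $c_j$ should be taken in $F$; this does not affect the argument.
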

\begin{definition}\label{28} \cite{anjali2023matrix}
Let $(\xi_1,\xi_2, \cdots, \xi_n)$ = $(^1{\xi}_1,^1{\xi}_2, \cdots, ^1{\xi}_n)e_1$ + $(^2{\xi}_1,^2{\xi}_2, \cdots, ^2{\xi}_n)e_2 \in \C_2^n$ and $T_1$, $T_2\in L_1^{nm}$. Then a map $T:\C_2^n\rightarrow \C_2^m$ is defined by the rule
\begin{eqnarray*}
T(\xi_{1}, \xi_{2},\ldots,\xi_{n})= T_1(^1{\xi}_1,^1{\xi}_2, \cdots, ^1{\xi}_n)e_2+T_2(^2{\xi}_1,^2{\xi}_2, \cdots, ^2{\xi}_n)e_2,
\end{eqnarray*}
where $L_1^{nm}$ denotes the set of all $\C_1$ linear maps from $\C_1^n$ to $\C_1^m$.
It is a routine matter to check that $T$ is a linear transformation. This linear transformation $T$  is denoted by ${T}_{{1}}e_1+{T}_{{2}}e_2$, $i.e.$ ${T}={T}_{{1}}e_1+{T}_{{2}}e_2$. The set of all such linear maps is denoted by the idempotent product $L_1^{nm} \times_{e} L_1^{nm}$ and defined as
\begin{eqnarray*}
L_1^{nm} \times_{e} L_1^{nm}=\{T_1e_1+T_2e_2\in L_2^{nm}: T_1,T_2\in L_1^{nm}\},
\end{eqnarray*}
The idempotent product $L_1^{nm} \times_{e} L_1^{nm}$ is a subspace of $L_2^{nm}$ over the field $\C_1$, where $L_2^{nm}$ denotes the set of all $\C_1$ linear maps from $\C_2^n$ to $\C_2^m$. Also, (see \cite{anjali2023matrix}, 3.5) $\dim(L_1^{nm} \times_{e} L_1^{nm})(\C_1)=2mn$. The vector spaces $\C_2^{m\times n}$ and $L_1^{nm} \times_{e} L_1^{nm}$ are isomorphic, because they have same dimension over the field $\C_1$.   
\end{definition}
\begin{definition}
	{\bf(Matrix representation of $T=T_1e_1+T_2e_2$)}\label{29}
Let $T$ be a linear transformation such that  $T=T_1e_1+T_2e_2 \in L_1^{nm} \times_{e} L_1^{nm}$. Let  ${\mathcal{B}}_{1}$ and ${\mathcal{B}}_{2}$ be the ordered bases of $\C_2^{n}$ and $\C_2^{m}$, respectively. Then the matrix representation of $T$ with respect to the ordered bases ${\mathcal{B}}_{1}$ and ${\mathcal{B}}_{2}$ is a complex matrix $A=[\xi_{ij}]_{2m\times 2n}\in \C_1^{2m\times 2n}$. It is denoted by $[T]_{{{\mathcal{B}}_{1}}, {{\mathcal{B}}_{2}}}$, $i.e.$
\begin{eqnarray*}
	[T]_{{{\mathcal{B}}_{1}}, {{\mathcal{B}}_{2}}}= A=[\xi_{ij}]_{2m\times 2n}\in \C_1^{2m\times 2n}.
\end{eqnarray*}  
\end{definition} 

\begin{definition}{\bf(Idempotent matrix representation of $T=T_1e_1+T_2e_2$)}\label{210}\ \cite{anjali2023matrix} Let $T$ be a linear transformation such that  $T=T_1e_1+T_2e_2 \in L_1^{nm} \times_{e} L_1^{nm}$. Let  ${\mathcal{B}}_{1}$ and ${\mathcal{B}}_{2}$ be the ordered bases of $\C_1^{n}$ and $\C_1^{m}$, respectively. Then the Idempotent matrix representation of $T$ with respect to the ordered bases ${\mathcal{B}}_{1}$ and ${\mathcal{B}}_{2}$ is denoted by $[T]^{{\mathcal{B}}_{1}}_{{\mathcal{B}}_{2}}$ and defined as
\begin{eqnarray*}
[T]^{\mathcal{B}_{1}}_{\mathcal{B}_{2}}=  [T_1]^{\mathcal{B}_{1}}_{\mathcal{B}_{2}}e_1 +  [T_2]^{\mathcal{B}_{1}}_{\mathcal{B}_{2}}e_{2},
\end{eqnarray*}  
where $[T_1]^{\mathcal{B}_{1}}_{\mathcal{B}_{2}}$ and $[T_2]^{\mathcal{B}_{1}}_{\mathcal{B}_{2}}$ denote the matrix representation of $T_1$ and $T_2$ with respect to the ordered bases ${\mathcal{B}}_{1}$ and ${\mathcal{B}}_{2}$, respectively.
\end{definition}

\begin{theorem}\label{214} \cite{strang2022introduction}
If the entries of the matrix $A$ are from a field $F$, i.e. $A= [\xi_{ij}]_{m\times n}  \in {F}^{m\times n}$, $F^n$ and $F^m$ are the vector spaces of the field $F$, and ${T}_{A}: F^n\to$ $F^m$ is a map such that ${T}_{A}(X) = {A}_{m\times n} \ {X}_{n\times 1},\ \mbox{for}\ X = (x_{1}, x_{2}, \cdots, x_{n}) \in {F}^{n}$, then ${T}_{A}$ is a linear transformation from $F^n$ to $F^m$. Moreover, if ${A}_{m\times n} \ {X}_{n\times 1} = {0}_{m \times 1}$ is a system of m equations in n variables $x_{1}, x_{2}, \cdots, x_{n} \in F$, $i.e.$ $X = (x_{1}, x_{2}, \cdots, x_{n}) \in F^{n}$ and ${0}_{m \times 1} \in F^{m\times 1}$ is the null matrix, then $Y \in F^{n}$ is a solution of the system of equations  ${A}_{m\times n} \ {X}_{n\times 1} = {0}_{m\times 1}$ if and only if $Y \in \ker ({T}_{A})$.
\end{theorem}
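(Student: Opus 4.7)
The plan is to prove the two assertions of the theorem in sequence, since both reduce to routine definition-chasing combined with elementary matrix arithmetic over the field $F$.

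First, I would show that $T_A$ is a linear transformation. Fixing $X, Y \in F^n$ and $c \in F$, I would verify the two axioms directly from the defining rule $T_A(X) = AX$. The additivity identity $T_A(X+Y) = A(X+Y) = AX + AY = T_A(X) + T_A(Y)$ uses the distributivity of matrix multiplication over vector addition, and the homogeneity identity $T_A(cX) = A(cX) = c(AX) = cT_A(X)$ uses the compatibility of scalar multiplication with matrix multiplication. Both facts are standard for matrices with entries in a field, so this step is essentially bookkeeping; the only point worth recording explicitly is that the $n$-tuple $X = (x_1, \ldots, x_n) \in F^n$ is being identified with its column-vector form $X_{n \times 1}$ so that the product $A_{m \times n}\, X_{n \times 1}$ is defined.

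Second, I would establish the biconditional characterizing solutions of the homogeneous system. By the very definition of a solution, $Y \in F^n$ solves $A_{m \times n}\, X_{n \times 1} = 0_{m \times 1}$ if and only if substituting $Y$ for $X$ yields $AY = 0_{m \times 1}$. Using the defining formula $T_A(Y) = AY$ from the first part, this is equivalent to $T_A(Y) = 0_{m \times 1}$, which is precisely the condition $Y \in \ker(T_A)$. Chaining these equivalences closes the argument in both directions simultaneously.

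As for the main obstacle: there is none of substance. The theorem is essentially a statement that re-packages solving a linear system as computing the kernel of the associated matrix transformation, and the proof is a short verification together with a chain of definitions. The only subtlety to flag is the column-vector versus tuple identification in the first part, so that $AX$ makes sense and agrees with the rule defining $T_A$; once that is made explicit, the rest is a two-line calculation.
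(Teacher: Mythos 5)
Your proof is correct. Note that the paper does not prove this statement at all: Theorem \ref{214} is quoted as a known preliminary from the cited linear algebra text, so there is no in-paper argument to compare against. Your verification --- additivity and homogeneity of $T_A$ from the distributivity and scalar-compatibility of matrix multiplication over a field, followed by the observation that $AY = 0_{m\times 1}$ is by definition the same condition as $T_A(Y) = 0$, i.e.\ $Y \in \ker(T_A)$ --- is exactly the standard proof, and your remark about identifying the tuple $X$ with the column vector $X_{n\times 1}$ is the only point requiring any care.
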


\begin{theorem}\label{215} \cite{anjali2023matrix}
If $T= T_{1} e_{1}+ T_{2} e_{2}\in L_{1}^{nm} \times_{e} L_{1}^{nm}$, then
\begin{enumerate}
\item $\ker ( T_{1}e_{1} + T_{2}e_{2}) = \ker (T_{1}) \times_{e} \ker (T_{2}) $.
\item $range ( T_{1}e_{1} + T_{2}e_{2}) = range (T_{1}) \times_{e} range (T_{2})  $.
\end{enumerate}
\end{theorem}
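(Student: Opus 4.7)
The statement is essentially an unwinding of the idempotent decomposition of $T=T_1e_1+T_2e_2$ combined with the $\C_1$-linear independence of $e_1,e_2$. The idea is to write every vector $\xi=(\xi_1,\ldots,\xi_n)\in\C_2^n$ in its idempotent form $\xi = {^1\xi}\,e_1 + {^2\xi}\,e_2$ with ${^i\xi}\in\C_1^n$, use the defining formula
\[
T(\xi) \;=\; T_1({^1\xi})\,e_1 + T_2({^2\xi})\,e_2
\]
from Definition \ref{28}, and then use that a sum $ae_1+be_2$ with $a,b\in\C_1^m$ is zero (respectively, lies in a particular set) if and only if its two idempotent components do. Throughout I will interpret the symbol $\times_e$ on the right-hand sides in the obvious way suggested by Definition \ref{28}, namely $K_1\times_e K_2 := \{u_1e_1+u_2e_2 : u_1\in K_1,\ u_2\in K_2\}$, regarded as a subset of $\C_2^n$ (resp.\ $\C_2^m$).

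\textbf{Step 1 (kernel).} I take $\xi\in\C_2^n$ and compute
$T(\xi)=0 \iff T_1({^1\xi})e_1+T_2({^2\xi})e_2 = 0$. Because $e_1,e_2$ are $\C_1$-linearly independent in $\C_2$ (stated just after Definition \ref{21}), and because the idempotent components of a vector in $\C_2^m$ are uniquely determined, this equality forces $T_1({^1\xi})=0$ and $T_2({^2\xi})=0$. This is equivalent to ${^1\xi}\in\ker T_1$ and ${^2\xi}\in\ker T_2$, i.e.\ $\xi\in\ker(T_1)\times_e\ker(T_2)$. Reversing every step gives the other inclusion, so the two sets coincide.

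\textbf{Step 2 (range).} For the second identity I argue by mutual inclusion. Given $\eta\in \mathrm{range}(T)$, write $\eta=T(\xi)=T_1({^1\xi})e_1+T_2({^2\xi})e_2$, so the first idempotent component of $\eta$ lies in $\mathrm{range}(T_1)$ and the second in $\mathrm{range}(T_2)$; hence $\eta\in\mathrm{range}(T_1)\times_e\mathrm{range}(T_2)$. Conversely, given $\eta = y_1e_1+y_2e_2$ with $y_i = T_i(x_i)$ for some $x_i\in\C_1^n$, I define $\xi := x_1 e_1 + x_2 e_2 \in\C_2^n$; then the formula for $T$ yields $T(\xi) = T_1(x_1)e_1+T_2(x_2)e_2 = \eta$, so $\eta\in\mathrm{range}(T)$.

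\textbf{Expected obstacle.} There is no hard analytic content; the only place one must be careful is the \emph{uniqueness} of the idempotent decomposition in $\C_2^n$, which is what makes both directions of each inclusion reversible. That uniqueness is precisely the $\C_1$-linear independence of $e_1,e_2$ noted in Definition \ref{21} and carried over to $\C_2^n$ via Definition \ref{22}. Once that is invoked, both parts are a direct translation between the equation $T(\xi)=\eta$ and the pair of equations $T_i({^i\xi})={^i\eta}$.
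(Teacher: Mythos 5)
Your argument is correct: the uniqueness of the idempotent decomposition in $\C_2^m$ (equivalently, the $\C_1$-linear independence of $e_1,e_2$) is exactly what makes $T(\xi)=\eta$ equivalent to the componentwise equations $T_i({}^i\xi)={}^i\eta$, and your two steps carry that through for the kernel and the range without gaps. Note, however, that the paper itself supplies no proof of Theorem \ref{215}; it is imported by citation from \cite{anjali2023matrix}, so there is nothing in this document to compare against — but your route (unwinding Definition \ref{28} and using uniqueness of the $e_1,e_2$ components, with independently chosen preimages in the converse inclusion for the range) is the standard and expected one.
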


\section{rank and solution of system of equations}
Anjali et al. \cite{anjali2023matrix}, had described an "Idempotent Method" for representing a matrix of a linear map of type $T=T_1e_1+T_2e_2: \C_2^{n} \longrightarrow \C_2^{m}$. This technique establishes a one-to-one correspondence between the bicomplex matrix $A$ and linear transformation $T= T_1e_1+T_2e_2 $ acting on the finite-dimensional vector spaces $\C_2^{n}\left( \C_1\right) $ to $\C_2^{m}\left( \C_1\right) $. Consequently, this linear transformation $T$ is crucial for our analysis and plays a significant role from the perspective of bicomplex matrices. Amita et al. \cite{wagh2024rank}
 had earlier investigated some results on the rank of bicomplex matrix. As $\C_{2}$ is not a field, so in this section, we have investigated a new approach to finding the solution of the system of equations ${A}_{m\times n} \ {X}_{n\times 1} = {B}_{m\times 1}$, where $A_{m\times n} \in \C_2^{m \times n}$, $X_{n\times 1} \in \C_{2}^{n\times 1}$, and $B_{m\times 1} \in \C_2^{m\times 1}$, and found some other results related to the rank of a bicomplex matrix.
\begin{definition}{\bf(Modified rank)}\label{31}
If $A=[\xi_{ij}]_{m\times n} \in \C_2^{m \times n}$, then the modified rank of $A$ is denoted by $\rho_{mr}(A)$ and defined as
\begin{align}
\mbox{Modified\ rank of}\ A &= rank({^{1}A}) + rank({^{2}A}) \\
i.e. \quad \rho_{mr}(A) &= \rho({^{1}A}) + \rho({^{2}A}).
\end{align}
\end{definition}
\begin{remark}\label{32} 
The definition \ref{31} yields that if $A=[\xi_{ij}]_{m\times n} \in \C_2^{m \times n}$, then $\rho_{mr}(A) \leq 2m, 2n $. 
\end{remark}
\begin{theorem}\label{33}
If $A={^{1}A} e_1 + {^{2}A} e_2 \in \C_2^{m\times n}$, then
dim$\left\lbrace (\mbox{column space of} \ {^{1}A})e_1 + (\mbox{column space of} \ {^{2}A})e_2 \right\rbrace$ = dim(column space of ${^{1}A}$) +  dim(column space of ${^{2}A}$), i.e. $\rho_{ic}(A)$ = $\rho({^{1}A}) + \rho({^{2}A})$.
\end{theorem}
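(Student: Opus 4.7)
The strategy is to build an explicit basis for the Idempotent column space of $A$ from bases of the ordinary column spaces of ${}^{1}A$ and ${}^{2}A$, exploiting the fact that $\{e_{1},e_{2}\}$ is a $\C_{1}$-linearly independent set in $\C_{2}$ (this is exactly the unique idempotent representation recalled in Definition \ref{21}).

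Write $r = \rho({}^{1}A)$ and $s = \rho({}^{2}A)$. I would first fix a basis $\{u_{1},\dots ,u_{r}\}$ of the column space of ${}^{1}A$ in $\C_{1}^{m}$ and a basis $\{v_{1},\dots ,v_{s}\}$ of the column space of ${}^{2}A$ in $\C_{1}^{m}$. My candidate basis for $\rho_{ic}(A)$ is then
\[
\mathcal{B}\;=\;\{u_{1}e_{1},\dots ,u_{r}e_{1},\;v_{1}e_{2},\dots ,v_{s}e_{2}\}\subseteq \C_{2}^{m}.
\]
Spanning is immediate from the very definition of the Idempotent column space in Definition \ref{27}: a generic element has the form $u e_{1}+v e_{2}$ with $u$ in the column space of ${}^{1}A$ and $v$ in the column space of ${}^{2}A$, and expanding $u=\sum_{i}\alpha_{i}u_{i}$, $v=\sum_{j}\beta_{j}v_{j}$ gives the desired $\C_{1}$-linear combination of $\mathcal{B}$.

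The main point, and the only place anything needs to be checked carefully, is $\C_{1}$-linear independence of $\mathcal{B}$. Suppose $\sum_{i}\alpha_{i}(u_{i}e_{1})+\sum_{j}\beta_{j}(v_{j}e_{2})=0$ in $\C_{2}^{m}$. Factoring gives $\bigl(\sum_{i}\alpha_{i}u_{i}\bigr)e_{1}+\bigl(\sum_{j}\beta_{j}v_{j}\bigr)e_{2}=0$, and reading this componentwise in $\C_{2}^{m}$ reduces the claim to the scalar fact that $pe_{1}+qe_{2}=0$ with $p,q\in\C_{1}$ forces $p=q=0$; this is nothing but the uniqueness of the idempotent representation. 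Hence $\sum_{i}\alpha_{i}u_{i}=0$ in $\C_{1}^{m}$ and $\sum_{j}\beta_{j}v_{j}=0$ in $\C_{1}^{m}$, and the $\C_{1}$-linear independence of $\{u_{i}\}$ and $\{v_{j}\}$ forces every $\alpha_{i}$ and every $\beta_{j}$ to vanish.

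Thus $\mathcal{B}$ is a basis, so $\rho_{ic}(A)=|\mathcal{B}|=r+s=\rho({}^{1}A)+\rho({}^{2}A)$. I do not anticipate any real obstacle: the whole proof is essentially the observation that the map $(u,v)\mapsto ue_{1}+ve_{2}$ from $\C_{1}^{m}\oplus\C_{1}^{m}$ into $\C_{2}^{m}(\C_{1})$ is a $\C_{1}$-linear injection, so it carries the direct sum of the two ordinary column spaces isomorphically onto the Idempotent column space, preserving dimension.
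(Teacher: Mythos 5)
Your proposal is correct and follows essentially the same route as the paper: both arguments take bases $\{u_i\}$ and $\{v_j\}$ of the column spaces of ${}^{1}A$ and ${}^{2}A$, form the set $\{u_ie_1,\dots,v_je_2\}$, verify spanning directly from the definition of the Idempotent column space, and establish $\C_1$-linear independence via the uniqueness of the idempotent representation. No gaps; the key step (that $pe_1+qe_2=0$ forces $p=q=0$) is exactly where the paper also places the weight of the argument.
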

\begin{proof}
Let $A={^{1}A} e_1 + {^{2}A} e_2 \in \C_2^{m\times n}$, ${\mathcal{B}_1}=\left\lbrace ^1{C}_1, ^1{C}_2,\cdots,^1{C}_{n_1} \right\rbrace  $ and ${\mathcal{B}_2}=\left\lbrace ^2{C}_1, ^2{C}_2,\cdots,^2{C}_{n_2} \right\rbrace  $ be the bases of column space of ${^{1}A}$ and column space of ${^{2}A}$, respectively.\\ 
Consider a set $S=\left\lbrace ^1{C}_1e_1, ^1{C}_2e_1,\cdots,^1{C}_{n_1}e_1, ^2{C}_1e_2, ^2{C}_2e_2,\cdots,^2{C}_{n_2}e_2\right\rbrace $. It is a trivial matter to find that
\begin{equation}\label{5}
\langle S \rangle \subseteq(\mbox{column space of} \ {^{1}A})e_1 + (\mbox{column space of} \ {^{2}A})e_2.
\end{equation}
Let
$(\xi_1,\xi_2, \cdots, \xi_m)\in (\mbox{column space of} \ {^{1}A})e_1 + (\mbox{column space of} \ {^{2}A})e_2$ such that
\[
 (\xi_1,\xi_2, \cdots, \xi_m) = (^1{\xi}_1,^1{\xi}_2, \cdots, ^1{\xi}_m)
 e_1 + (^2{\xi}_1,^2{\xi}_2, \cdots, ^2{\xi}_m)e_2.
 \] This implies that
\begin{eqnarray*} 
&&(^1{\xi}_1,^1{\xi}_2, \cdots, ^1{\xi}_m) \in (\mbox{column space of} \ {^{1}A})\ \mbox{and}\ (^2{\xi}_1,^2{\xi}_2, \cdots, ^2{\xi}_m) \in (\mbox{column space of} \ {^{2}A})\\
&\Rightarrow& (^1{\xi}_1,^1{\xi}_2, \cdots, ^1{\xi}_m)=\sum_{i=1}^{n_1}\alpha_i .^1{C}_{i}\ \mbox{and}\ (^2{\xi}_1,^2{\xi}_2,\cdots,^2{\xi}_m)=\sum_{j=1}^{n_2}\beta_j .^2{C}_j; \ \mbox{for} \  \alpha_i \ \mbox{and}\ \beta_j\in \C_1,\\
&&\mbox{for all}\ i \in \left\lbrace 1,2,\cdots, n_1\right\rbrace  \ \mbox{and}\ j \in \left\lbrace 1,2,\cdots, n_2 \right\rbrace \\
&\Rightarrow& (\xi_1,\xi_2, \cdots, \xi_m)=\left(\sum_{i=1}^{n_1}\alpha_i .^1{C}_{i}\right)e_1+\left(\sum_{j=1}^{n_2}\beta_j .^2{C}_j\right)e_2.\\
&& \mbox{In view of \ref{22}, we have}\\
&& (\xi_1,\xi_2, \cdots, \xi_m)= \sum_{i=1}^{n_1}\left(\alpha_i.^1{C}_{i}\right) e_1+\sum_{j=1}^{n_2}\left(\beta_j.^2{C}_{j}\right)e_2=\sum_{i=1}^{n_1}\alpha_i\left( ^1{C}_{i}e_1\right) +\sum_{j=1}^{n_2}\beta_j \left(^2{C}_je_2 \right)\\
&\Rightarrow& (\xi_1,\xi_2, \cdots, \xi_m)\in \langle S \rangle,
\end{eqnarray*}
which gives that 
\begin{equation}\label{6}
(\mbox{column space of} \ {^{1}A})e_1 + (\mbox{column space of} \ {^{2}A})e_2 \subseteq \langle S \rangle.
\end{equation}
Thus, from (\ref{5}) and (\ref{6}), we get
\begin{equation*}
(\mbox{column space of} \ {^{1}A})e_1 + (\mbox{column space of} \ {^{2}A})e_2 = \langle S \rangle.
\end{equation*}
Furthermore, if we consider 
\begin{eqnarray*}
&&\sum_{i=1}^{n_1}\alpha_i\left( ^1{C}_{i}e_1\right) +\sum_{j=1}^{n_2}\beta_j \left(^2{C}_je_2 \right)=0\\
&\Rightarrow& \sum_{i=1}^{n_1}\left(\alpha_i.^1{C}_{i}\right) e_1+\sum_{j=1}^{n_2}\left(\beta_j.^2{C}_{j}\right)e_2  =0\\
&\Rightarrow& \left(\sum_{i=1}^{n_1}\alpha_i .^1{C}_{i}\right)e_1+\left(\sum_{j=1}^{n_2}\beta_j .^2{C}_j\right)e_2=0,
\end{eqnarray*}
using \ref{22} and, ${\mathcal{B}_1}$, ${\mathcal{B}_2}$ being the bases follows that
\begin{eqnarray*}
&& \left(\sum_{i=1}^{n_1}\alpha_i .^1{C}_{i}\right)=0\ \mbox{and}\ \left(\sum_{j=1}^{n_2}\beta_j .^2{C}_j\right)=0\\
&\Rightarrow& \alpha_i=0 \ \mbox{and} \ \beta_j=0, \ \mbox{for all}\ i \in \left\lbrace 1,2,\cdots, n_1\right\rbrace  \ \mbox{and}\ j \in \left\lbrace 1,2,\cdots, n_2 \right\rbrace. 
\end{eqnarray*}
This implies that $S$ is linearly independent set. Therefore, $S$ is a basis of (column space of ${^{1}A})e_1$ + (column space of ${^{2}A})e_2$. Hence, dim$\left\lbrace (\mbox{column space of} \ {^{1}A})e_1 + (\mbox{column space of} \ {^{2}A})e_2 \right\rbrace$ = dim(column space of ${^{1}A}$) +  dim(column space of ${^{2}A}$).
\end{proof}
Dually we may prove the following corollary for idempotent row space of $A$.
\begin{corollary}\label{34}
	If $A={^{1}A} e_1 + {^{2}A} e_2 \in \C_2^{m\times n}$, then
dim$\left\lbrace (\mbox{row space of} \ {^{1}A})e_1 + (\mbox{row space of} \ {^{2}A})e_2 \right\rbrace$ = dim(row space of ${^{1}A}$) +  dim(row space of ${^{2}A}$), i.e. $\rho_{ir}(A)$ = $\rho({^{1}A}) + \rho({^{2}A})$.
\end{corollary}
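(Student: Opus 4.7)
The plan is to mirror the argument of Theorem \ref{33} verbatim, with ``column space'' replaced by ``row space'' throughout; no genuinely new idea is required, because the vector-space structure in play is identical once we pass from $m$-tuples to $n$-tuples. Concretely, I would start by choosing bases $\mathcal{B}_1 = \{{}^1R_1,\ldots,{}^1R_{m_1}\}$ of the row space of ${}^1A$ in $\C_1^n$ and $\mathcal{B}_2 = \{{}^2R_1,\ldots,{}^2R_{m_2}\}$ of the row space of ${}^2A$ in $\C_1^n$, where $m_1 = \rho({}^1A)$ and $m_2 = \rho({}^2A)$.

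Next, I would form the candidate spanning set
\[
S = \{{}^1R_1 e_1,\ldots,{}^1R_{m_1} e_1,\; {}^2R_1 e_2,\ldots,{}^2R_{m_2} e_2\} \subset \C_2^n,
\]
and prove that $\langle S \rangle$ (the $\C_1$-span) coincides with the idempotent row space of $A$. The inclusion $\langle S\rangle \subseteq (\text{row space of }{}^1A)e_1 + (\text{row space of }{}^2A)e_2$ is immediate. For the reverse inclusion, I would take an arbitrary element $(\eta_1,\ldots,\eta_n) = ({}^1\eta_1,\ldots,{}^1\eta_n)e_1 + ({}^2\eta_1,\ldots,{}^2\eta_n)e_2$ of the idempotent row space, expand each idempotent component in the appropriate basis as $\sum_i \alpha_i {}^1R_i$ and $\sum_j \beta_j {}^2R_j$, and then use Definition \ref{22} to pull the scalars past the idempotents, writing the element as $\sum_i \alpha_i({}^1R_i e_1) + \sum_j \beta_j({}^2R_j e_2) \in \langle S\rangle$.

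For linear independence, suppose $\sum_i \alpha_i({}^1R_i e_1) + \sum_j \beta_j({}^2R_j e_2) = 0$. Collecting terms via Definition \ref{22} rewrites this as $\bigl(\sum_i \alpha_i {}^1R_i\bigr)e_1 + \bigl(\sum_j \beta_j {}^2R_j\bigr)e_2 = 0$, and the uniqueness of the idempotent decomposition in $\C_2^n$ forces both $\C_1$-sums to vanish. Since $\mathcal{B}_1$ and $\mathcal{B}_2$ are bases, all $\alpha_i$ and $\beta_j$ are zero, so $S$ is a $\C_1$-linearly independent set of size $m_1 + m_2$. Hence $S$ is a basis of the idempotent row space, and counting its cardinality yields $\rho_{ir}(A) = \rho({}^1A) + \rho({}^2A)$.

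The only minor obstacle, if any, is being careful that the scalar-extraction step $\alpha \cdot ({}^1R\, e_1) = (\alpha\, {}^1R)e_1$ used in both the spanning and independence arguments is legitimate; this is precisely what the $\C_1$-algebra structure on $\C_2^n$ recalled in Definition \ref{22} guarantees, so no further justification is needed. Since every step is strictly dual to the column case already proved, the author's instinct to state this as a corollary whose proof is left to the reader is well justified.
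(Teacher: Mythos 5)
Your proposal is correct and matches the paper's intent exactly: the paper gives no separate proof, stating only that the corollary follows ``dually'' from Theorem \ref{33}, and your row-space transcription of that argument (same spanning set built from $e_1$- and $e_2$-scaled basis vectors, same use of the uniqueness of the idempotent decomposition in $\C_2^n$ for both spanning and independence) is precisely the dualization the authors have in mind.
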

The following corollaries \ref{35} and \ref{36} are immediate consequences of \ref{31}, \ref{33} and \ref{34}.
\begin{corollary}\label{35}
If $A={^{1}A} e_1 + {^{2}A} e_2 \in \C_2^{m\times n}$, then
\begin{eqnarray*}
\mbox{dim}\left\lbrace (\mbox{row space of} \ {^{1}A})e_1 + (\mbox{row space of} \ {^{2}A})e_2 \right\rbrace&=&\mbox{dim}\left\lbrace (\mbox{column space of} \ {^{1}A})e_1 + (\mbox{column space of} \ {^{2}A})e_2 \right\rbrace\\
i.e. \quad \rho_{ir}(A)&=&\rho_{ic}(A)\ =\ \rho({^{1}A}) + \rho({^{2}A}).
\end{eqnarray*}
\end{corollary}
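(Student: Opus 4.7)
The plan is to read off the desired chain of equalities directly from Theorem \ref{33} and Corollary \ref{34}, using the fact that ${^{1}A}$ and ${^{2}A}$ are ordinary complex matrices for which the classical row-rank equals column-rank theorem holds. So no new decomposition work is needed; the two pieces of machinery already do the heavy lifting.

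Concretely, I would proceed as follows. First, apply Theorem \ref{33} to conclude
\begin{equation*}
\rho_{ic}(A) \;=\; \rho({^{1}A}) + \rho({^{2}A}),
\end{equation*}
where on the right-hand side $\rho(\cdot)$ denotes the common row/column rank of a complex matrix. Next, apply Corollary \ref{34} to conclude
\begin{equation*}
\rho_{ir}(A) \;=\; \rho({^{1}A}) + \rho({^{2}A}).
\end{equation*}
Comparing the two expressions gives $\rho_{ir}(A) = \rho_{ic}(A) = \rho({^{1}A}) + \rho({^{2}A})$, which is exactly the statement of the corollary after unpacking the definitions of $\rho_{ir}$ and $\rho_{ic}$ as the dimensions of the two idempotent spaces displayed in the statement.

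There is essentially no obstacle here since both constituent results are already proved; the only subtle point is making sure that the symbol $\rho({^{i}A})$ is being used unambiguously. This is justified by the classical fact (over $\C_1$) that the dimension of the row space of a complex matrix equals the dimension of its column space, so each of $\rho({^{1}A})$ and $\rho({^{2}A})$ is well-defined without having to specify row or column. After citing this fact I would simply stitch together the two equalities above and declare the proof complete.
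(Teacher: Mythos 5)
Your proposal is correct and matches the paper exactly: the paper gives no separate proof of Corollary \ref{35}, stating only that it is an immediate consequence of Theorem \ref{33} and Corollary \ref{34}, which is precisely the chain of equalities you assemble. Your additional remark that $\rho({^{1}A})$ and $\rho({^{2}A})$ are unambiguous because row rank equals column rank for complex matrices is a sensible (if implicit in the paper) justification.
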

\begin{corollary}\label{36}
If $A={^{1}A} e_1 + {^{2}A} e_2 \in \C_2^{m\times n}$, then
\begin{eqnarray*}
\mbox{Idempotent row rank of}\ A&=&\mbox{Idempotent column rank of}\ A \ =\ \mbox{Modified\ rank of}\ A.\\
i.e. \quad \rho_{ir}(A)&=&\rho_{ic}(A)\ =\ \rho_{mr}(A)\ = \ \rho({^{1}A}) + \rho({^{2}A}).
\end{eqnarray*}
\end{corollary}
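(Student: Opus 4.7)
The statement to be proved is really just a bookkeeping of three equalities that have already been established in the preceding results, so the plan is to combine them directly without further geometric or algebraic work.

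First, I would invoke Theorem \ref{33}, which yields that the idempotent column rank of $A$ is exactly
\begin{equation*}
\rho_{ic}(A) = \rho({^1A}) + \rho({^2A}).
\end{equation*}
Next, I would apply Corollary \ref{34}, the dual statement for rows, giving
\begin{equation*}
\rho_{ir}(A) = \rho({^1A}) + \rho({^2A}).
\end{equation*}
Finally, Definition \ref{31} of the modified rank supplies
\begin{equation*}
\rho_{mr}(A) = \rho({^1A}) + \rho({^2A}).
\end{equation*}

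Since all three quantities $\rho_{ir}(A)$, $\rho_{ic}(A)$, and $\rho_{mr}(A)$ are seen to equal the same complex number $\rho({^1A}) + \rho({^2A})$, they are mutually equal, which is exactly the content of the corollary. There is no genuine obstacle here: the real work was done in the proof of Theorem \ref{33} (constructing the spanning set $S = \{{^1C_i}e_1\} \cup \{{^2C_j}e_2\}$ from bases of the two complex column spaces and verifying its linear independence over $\C_1$), and the row-space dual handled by Corollary \ref{34}. The present corollary is therefore a one-line synthesis: chain the three equations together through their common right-hand side $\rho({^1A}) + \rho({^2A})$, and cite the three earlier results as justification.
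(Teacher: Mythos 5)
Your proposal is correct and matches the paper exactly: the paper itself states that this corollary is an immediate consequence of Definition \ref{31}, Theorem \ref{33}, and Corollary \ref{34}, which is precisely the chain of citations you give. One trivial slip: $\rho({^1A})+\rho({^2A})$ is a nonnegative integer, not a ``complex number.''
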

If ${T}:\C_2^n\to$ $\C_2^m$ is a linear transformation such that  $T=T_1e_1+T_2e_2 \in L_1^{nm} \times_{e} L_1^{nm}$, then we cannot apply \ref{290} on $T$ as $\C_{2}$ is not a field. Thus, with the help of Idempotent matrix representation of $T$, we have established a new result \ref{37} for such $T$.  
\begin{theorem}\label{37}
Let $T$ be a linear transformation such that  $T=T_1e_1+T_2e_2 \in L_1^{nm} \times_{e} L_1^{nm}$, and ${\mathcal{B}}_{1}$ and ${\mathcal{B}}_{2}$ be the standard ordered bases of $\C_1^{n}$ and $\C_1^{m}$, respectively. Let the Idempotent matrix representation of $T$ with respect to the ordered bases ${\mathcal{B}}_{1}$ and ${\mathcal{B}}_{2}$ be $A=[\xi_{ij}]_{m\times n} \in \C_2^{m \times n}$, $i.e. \ A= [T]^{{\mathcal{B}}_{1}}_{{\mathcal{B}}_{2}}$ and $W$ be the subspace of $\C_2^m$ such that  $W = \mbox{Idempotent column space of}\ A$. Then column  space  of  $A  \subseteq Range\left(T \right)=W$.
\end{theorem}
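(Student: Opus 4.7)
The plan is to exploit the idempotent decompositions of both $T$ and $A$ in parallel. Since $T=T_1e_1+T_2e_2$ with $T_1,T_2\in L_1^{nm}$, the idempotent matrix representation $A=[T]^{\mathcal{B}_1}_{\mathcal{B}_2}$ from Definition \ref{210} splits as $A={^1A}e_1+{^2A}e_2$, where ${^1A}=[T_1]^{\mathcal{B}_1}_{\mathcal{B}_2}$ and ${^2A}=[T_2]^{\mathcal{B}_1}_{\mathcal{B}_2}$ are the classical complex matrix representations of $T_1$ and $T_2$. The strategy is first to identify $Range(T)$ with the idempotent column space $W$ by reducing to the classical field-case result on each component, and then to obtain the remaining inclusion directly from the already-established comparison between column space and idempotent column space.

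For the equality $Range(T)=W$, apply Theorem \ref{215}(2) to write
\[
range(T)=range(T_1)\times_e range(T_2)=range(T_1)\,e_1+range(T_2)\,e_2.
\]
Because each $T_i:\C_1^n\to\C_1^m$ is a $\C_1$-linear map whose matrix in the standard bases is precisely ${^iA}$, Theorem \ref{290} applies in the field $F=\C_1$ and gives $range(T_i)=$ column space of ${^iA}$ for $i=1,2$. Substituting yields
\[
range(T)=(\text{column space of }{^1A})\,e_1+(\text{column space of }{^2A})\,e_2,
\]
which is exactly the idempotent column space of $A$, that is, $W$.

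The remaining inclusion, column space of $A\subseteq Range(T)=W$, is then immediate from Theorem \ref{2120}, which asserts that the column space of any bicomplex matrix is contained in its idempotent column space.

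The argument presents no serious obstacle; the only care required is the bookkeeping in confirming that the components ${^1A}$, ${^2A}$ of the idempotent representation of $A$ really do coincide with the matrices of $T_1$, $T_2$ in the chosen bases, as dictated by Definition \ref{210}, so that Theorem \ref{290} can be applied componentwise. Once that identification is in place, the two assertions of the theorem reduce to direct invocations of Theorem \ref{215}, Theorem \ref{290}, and Theorem \ref{2120}.
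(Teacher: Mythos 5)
Your proof is correct, but it reaches the equality $Range(T)=W$ by a genuinely different route than the paper. The paper's proof never invokes Theorem \ref{215} or Theorem \ref{290}; instead, after identifying ${^1A}=[T_1]^{\mathcal{B}_1}_{\mathcal{B}_2}$ and ${^2A}=[T_2]^{\mathcal{B}_1}_{\mathcal{B}_2}$ exactly as you do, it verifies by a direct double-inclusion computation that $T:\C_2^n\to W$ is onto: it expands an arbitrary $X={^1X}e_1+{^2X}e_2$ in the standard basis to show $T(X)\in W$, and conversely exhibits an explicit preimage for each $Y={^1Y}e_1+{^2Y}e_2\in W$. That hands-on argument is, in effect, an inline componentwise reproof of Theorem \ref{290}. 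Your version replaces it with two citations: Theorem \ref{215}(2) to split $range(T)$ as $range(T_1)e_1+range(T_2)e_2$, and Theorem \ref{290} applied over the field $\C_1$ to each $T_i$ to identify $range(T_i)$ with the column space of ${^iA}$. This is legitimate --- the paper's remark preceding the theorem only rules out applying \ref{290} to $T$ itself over $\C_2$, not to its idempotent components over $\C_1$ --- and it yields a shorter, more modular proof at the cost of leaning on the earlier structural results rather than exhibiting the surjection explicitly. Both proofs then finish identically, deducing the inclusion of the column space of $A$ in $W$ from Theorem \ref{2120}. The one bookkeeping point you rightly flag, that the idempotent components of $A=[T]^{\mathcal{B}_1}_{\mathcal{B}_2}$ are precisely the matrices of $T_1$ and $T_2$, follows from Definition \ref{210} together with the uniqueness of the idempotent decomposition of a bicomplex matrix, so no gap remains.
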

\begin{proof}
Let ${\mathcal{B}}_{1}=\left\lbrace f_1, f_2, f_3, \cdots, f_n \right\rbrace $ and ${\mathcal{B}}_{2}=\left\lbrace g_1, g_2, g_3, \cdots, g_m \right\rbrace$ be the standard ordered bases of $\C_1^{n}$ and $\C_1^{m}$, respectively and  $T=T_1e_1+T_2e_2 \in L_1^{nm} \times_{e} L_1^{nm}$ such that $[T]^{{\mathcal{B}}_{1}}_{{\mathcal{B}}_{2}}=A=[\xi_{ij}]_{m\times n}$. Using \ref{23}, \ref{2120}, \ref{28} and \ref{210}, we get
\begin{eqnarray*}
&& [T_1e_1+T_2e_2]^{{\mathcal{B}}_{1}}_{{\mathcal{B}}_{2}} = A=[\xi_{ij}]_{m\times n}\\
&\Rightarrow& [T_1]^{{\mathcal{B}}_{1}}_{{\mathcal{B}}_{2}} e_1 +  [T_2]^{{\mathcal{B}}_{1}}_{{\mathcal{B}}_{2}} e_{2}= A=[\xi_{ij}]_{m\times n}\\
&\Rightarrow& [T_1]^{{\mathcal{B}}_{1}}_{{\mathcal{B}}_{2}} = {^{1}A}=[^1{\xi}_{ij}]_{m\times n}\ \mbox{and} \  [T_2]^{{\mathcal{B}}_{1}}_{{\mathcal{B}}_{2}} = {^{2}A}=[^2{\xi}_{ij}]_{m\times n}\\
&\Rightarrow&  {T_1}\left( f_j \right) =\sum_{i=1}^{m}{^1{\xi}_{ij}}g_i \ \mbox{and} \ {T_2}\left( f_j \right) =\sum_{i=1}^{m}{^2{\xi}_{ij}}g_i;\ \mbox{where} \ j=1, 2, \cdots, n.
\end{eqnarray*}
Now, we assert that $T:\C_2^n\to$ $W$ is an onto linear transformation. Let us suppose that $X={^{1}X}e_1+{^{2}X}e_2\in \C_2^n$. Then
\begin{eqnarray*}
{^{1}X}=\sum_{i=1}^{n} \alpha_{i} f_{i} \quad \mbox{and} \quad {^{2}X}=\sum_{i=1}^{n} \beta_{i} f_{i};\ \mbox{for all} \  \alpha_i \ \text{and}\ \beta_i\in \C_1,\ i\in \left\lbrace 1,2,\cdots, n\right\rbrace.
\end{eqnarray*} 
Now,
\begin{eqnarray*}
T(X)&=&(T_{1}e_{1} + T_{2} e_{2})(X)\\
&=& T_{1}({^{1}X})e_{1} + T_{2}({^{2}X})e_{2}\\
&=&  T_{1}\left(\sum_{i=1}^{n} \alpha_{i} f_{i}\right)e_{1} +  T_{2}\left(\sum_{i=1}^{n} \beta_{i} f_{i}\right) e_{2}\\
&=&   \left(\sum_{i=1}^{n} \alpha_{i} T_{1}(f_{i})\right) e_{1}+   \left(\sum_{i=1}^{n} \beta_{i} T_{2}(f_{i})\right)e_{2}
\end{eqnarray*}
It is evident that $T_{1}(f_{j})=\left({^1{\xi}_{1j}}, {^1{\xi}_{2j}}, \cdots, {^1{\xi}_{mj}} \right)\  \mbox{and}\ T_{2}(f_{j})=\left({^2{\xi}_{1j}}, {^2{\xi}_{2j}}, \cdots, {^2{\xi}_{mj}} \right)$, $\forall \ j\in \left\lbrace 1,2,\cdots, n\right\rbrace$. Then
\begin{eqnarray*}
&& \left(\sum_{i=1}^{n} \alpha_{i} T_{1}(f_{i})\right) \in \mbox{column space of} \ {^{1}A} \quad \mbox{and} \quad \left(\sum_{i=1}^{n} \beta_{i} T_{2}(f_{i})\right) \in \mbox{column space of} \ {^{2}A}\\
&\Rightarrow& T(X)\in\mbox{Idempotent column space of}\ A=W.
\end{eqnarray*}
Furthermore, if we take $Y={^{1}Y}e_1+{^{2}Y}e_2\in W$. Then
\begin{eqnarray*}
&& {^{1}Y}\in \mbox{column space of} \ {^{1}A} \quad \mbox{and} \quad {^{2}Y} \in \mbox{column space of} \ {^{2}A}  \\
&\Rightarrow& {^{1}Y} = \left(\sum_{i=1}^{n} \gamma_{i} T_{1}(f_{i})\right)
\quad \mbox{and} \quad {^{2}Y} = \left(\sum_{i=1}^{n} \delta_{i} T_{2}(f_{i})\right);\ \mbox{for all} \  \gamma_i \ \text{and}\ \delta_i\in \C_1,\ i\in \left\lbrace 1,2,\cdots, n\right\rbrace\\
&\Rightarrow& {^{1}Y}e_1+{^{2}Y}e_2 = \left(\sum_{i=1}^{n} \gamma_{i} T_{1}(f_{i})\right) e_1 + \left(\sum_{i=1}^{n} \delta_{i} T_{2}(f_{i})\right)e_2\\
&\Rightarrow& {^{1}Y}e_1+{^{2}Y}e_2 = T_{1}\left(\sum_{i=1}^{n} \gamma_{i} f_{i}\right)e_1  +  T_{2}\left(\sum_{i=1}^{n} \delta_{i} f_{i}\right)e_2\\
&\Rightarrow& {^{1}Y}e_1+{^{2}Y}e_2 = ( T_{1} e_1+ T_{2}e_2) \left[\left(\sum_{i=1}^{n} \gamma_{i} f_{i}\right)e_1  + \left(\sum_{i=1}^{n}  \delta_{i} f_{i}\right)e_2 \right]\\
&\Rightarrow& Y = T \left[\left(\sum_{i=1}^{n} \gamma_{i} f_{i}\right)e_1  + \left(\sum_{i=1}^{n}  \delta_{i} f_{i}\right)e_2 \right]
\end{eqnarray*}
It shows that $T:\C_2^n\to$ $W$ is an onto linear transformation. Therefore
\begin{center}
column  space  of  $A  \subseteq Range\left(T \right)=W$.   
\end{center}
\end{proof}
The following corollary \ref{38} is immediate consequence of \ref{36} and \ref{37}.
\begin{corollary}\label{38}
Let $T$ be a linear transformation such that  $T=T_1e_1+T_2e_2 \in L_1^{nm} \times_{e} L_1^{nm}$, and let ${\mathcal{B}}_{1}$ and ${\mathcal{B}}_{2}$ be the standard ordered bases of $\C_1^{n}$ and $\C_1^{m}$, respectively. Let the Idempotent matrix representation of $T$ with respect to the ordered bases ${\mathcal{B}}_{1}$ and ${\mathcal{B}}_{2}$ be $A=[\xi_{ij}]_{m\times n} \in \C_2^{m \times n}$, $i.e.$ $A= [T]^{{\mathcal{B}}_{1}}_{{\mathcal{B}}_{2}}$. Then
\begin{enumerate}
\item
$Rank(T)=$ dim(Idempotent column space of $A$) i.e. $\rho(T)= \rho_{ic}(A)$.
\item 
$\rho_{ir}(A)=\rho_{ic}(A)=\rho_{mr}(A)=\rho({^{1}A}) + \rho({^{2}A}) = \rho(T)$.
\end{enumerate}
\end{corollary}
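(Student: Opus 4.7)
The plan is that this corollary is not really a new theorem but a direct assembly of two prior results, so the proof is a short chain of equalities rather than any substantive new argument. The two ingredients are Corollary 3.6, which gives the three-way equality $\rho_{ir}(A)=\rho_{ic}(A)=\rho_{mr}(A)=\rho({^{1}A})+\rho({^{2}A})$, and Theorem 3.7, which identifies $\mathrm{Range}(T)$ with the idempotent column space $W$ of $A$.

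For part (1), I would start by invoking Theorem 3.7 in the hypothesis setup of the corollary (same linear map $T=T_1e_1+T_2e_2$, same standard bases, same $A=[T]^{{\mathcal{B}}_1}_{{\mathcal{B}}_2}$). That theorem gives $\mathrm{Range}(T)=W=\mbox{Idempotent column space of }A$. Taking $\C_1$-dimensions on both sides yields
\begin{equation*}
\rho(T)=\dim(\mathrm{Range}(T))=\dim(\mbox{Idempotent column space of }A)=\rho_{ic}(A),
\end{equation*}
which is exactly assertion (1).

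For part (2), I would simply append this new equality $\rho(T)=\rho_{ic}(A)$ to the chain already provided by Corollary 3.6, getting
\begin{equation*}
\rho_{ir}(A)=\rho_{ic}(A)=\rho_{mr}(A)=\rho({^{1}A})+\rho({^{2}A})=\rho(T).
\end{equation*}
No new computation is required at this stage; the role of the corollary is to record the identification of the rank of the operator $T$ with the numerical invariants of the bicomplex matrix $A$ established earlier in the section.

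There is no serious obstacle here. The only small point to be careful about is making sure that the dimensions are taken with respect to the same scalar field, namely $\C_1$: the ranks $\rho({^{1}A})$ and $\rho({^{2}A})$ are $\C_1$-dimensions of complex column spaces, $\rho_{ic}(A)$ and $\rho_{ir}(A)$ are $\C_1$-dimensions of subspaces of $\C_2^m(\C_1)$ and $\C_2^n(\C_1)$ respectively, and $\rho(T)=\dim_{\C_1}\mathrm{Range}(T)$ since $T$ is a $\C_1$-linear map on $\C_2^n(\C_1)$. Once this is remarked, the corollary follows immediately by citing Corollary 3.6 and Theorem 3.7.
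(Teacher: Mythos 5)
Your proof matches the paper exactly: the paper gives no separate argument for this corollary, stating only that it is an immediate consequence of Corollary \ref{36} and Theorem \ref{37}, which is precisely the two-step assembly you carry out (taking $\C_1$-dimensions in $\mathrm{Range}(T)=W$ and appending $\rho(T)=\rho_{ic}(A)$ to the chain of equalities). Your explicit remark about keeping all dimensions over the same field $\C_1$ is a useful clarification the paper leaves implicit.
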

\begin{definition}
Let $A={^{1}A} e_1 + {^{2}A} e_2 = [\xi_{ij}]_{m\times n} \in \C_2^{m\times n}$, and ${^{1}X}\ \mbox{and}\ {^{2}X}\in \C_1^n$. Then we define ${T}_{^{1}A}:\C_1^n\to$ $\C_1^m$ and ${T}_{^{2}A}:\C_1^n\to$ $\C_1^m$ such that ${T}_{^{1}A}({^{1}X}) = {^{1}A}_{m\times n} {^{1}X}_{n\times 1}$ and ${T}_{^{2}A}({^{2}X}) = {^{2}A}_{m\times n} {^{2}X}_{n\times 1}$. It is a routine matter to check that ${T}_{^{1}A}e_1+{T}_{^{2}A}e_2 \in L_1^{nm} \times_{e} L_1^{nm}$. This linear transformation ${T}_{^{1}A}e_1+{T}_{^{2}A}e_2$ is denoted by ${T}_{A}$, $i.e.$ ${T}_{A}={T}_{^{1}A}e_1+{T}_{^{2}A}e_2$.
\end{definition}
\begin{remark}\label{310}
If $A={^{1}A} e_1 + {^{2}A} e_2 = [\xi_{ij}]_{m\times n} \in \C_2^{m\times n}$ and $X={^{1}X}e_1+{^{2}X}e_2\in \C_2^n$, then ${T}_{A}:\C_2^n\to$ $\C_2^m$ and
\begin{eqnarray*}
{T}_{A}(X) &=& {T}_{^{1}A}({^{1}X}){e_1} + {T}_{^{2}A}({^{2}X}){e_2}\\ 
&=& \left({^{1}A}_{m\times n} {^{1}X}_{n\times 1}\right) {e_1} + \left({^{2}A}_{m\times n} {^{2}X}_{n\times 1}\right) {e_2}\\
&=& {A}_{m\times n} \ {X}_{n\times 1}.
\end{eqnarray*}
Moreover, if ${\mathcal{B}}_{1}$ and ${\mathcal{B}}_{2}$ are the standard ordered bases of $\C_1^{n}$ and $\C_1^{m}$, respectively, then the idempotent matrix representation of ${T}_{A}$ with respect to the ordered bases ${\mathcal{B}}_{1}$ and ${\mathcal{B}}_{2}$ is the matrix $A$, i.e. $[{T}_{A}]^{{\mathcal{B}}_{1}}_{{\mathcal{B}}_{2}} = A$. By \ref{37}\ \mbox{and}\ \ref{38}, it is evident that $Range\left({T}_{A} \right) = \mbox{Idempotent column space of}\ A$ and $\rho({T}_{A}) = \rho({^{1}A}) + \rho({^{2}A}) = \rho_{ir}(A)=\rho_{ic}(A)=\rho_{mr}(A)$.
\end{remark}
\begin{theorem}\label{311}
Let $A= {^{1}A} e_1 + {^{2}A} e_2 =[\xi_{ij}]_{m\times n}  \in \C_2^{m\times n}, \ B = {^{1}B} e_1 + {^{2}B} e_2 = (b_{1}, b_{2}, \cdots, b_{m}) \in \C_{2}^{m}$  and ${A}_{m\times n} \ {X}_{n\times 1} = {B}_{m\times 1}$ be a system of m equations in n variables $x_{1}, x_{2}, \cdots, x_{n} \in \C_{2}$, that is $X = (x_{1}, x_{2}, \cdots, x_{n}) \in \C_{2}^{n}$ and
\begin{eqnarray*}
&\xi_{11}{x_{1}} + \xi_{12}{x_{2}} + \xi_{13}{x_{3}} +  \cdots + \xi_{1n}{x_{n}}& = b_{1},\\
&\xi_{21}{x_{1}} + \xi_{22}{x_{2}} + \xi_{23}{x_{3}} +  \cdots + \xi_{2n}{x_{n}}& = b_{2},\\
&\xi_{31}{x_{1}} + \xi_{32}{x_{2}} + \xi_{33}{x_{3}} +  \cdots + \xi_{3n}{x_{n}}& = b_{3},\\
&\vdots \hspace{0.45cm} \vdots \hspace{0.45cm} \vdots \hspace{0.45cm} \vdots \hspace{0.45cm} \vdots \hspace{0.45cm} \vdots \hspace{0.45cm} \vdots \hspace{0.45cm}\vdots \hspace{0.45cm}\vdots \hspace{0.45cm} \vdots& \vdots \hspace{0.45cm} \vdots\\
&\xi_{m1}{x_{1}} + \xi_{m2}{x_{2}} + \xi_{m3}{x_{3}} +  \cdots + \xi_{mn}{x_{n}}& = b_{m},
\end{eqnarray*}
then there exists a solution of the system of equations  ${A}_{m\times n} \ {X}_{n\times 1} = {B}_{m\times 1}$ if and only if $\rho({^{1}A}\ | \ {^{1}B})$ = $\rho({^{1}A})$ and $\rho({^{2}A}\ | \ {^{2}B})$ = $\rho({^{2}A})$.
\end{theorem}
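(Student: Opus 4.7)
The plan is to reduce the bicomplex system to a pair of independent complex systems by means of the idempotent decomposition, and then invoke the classical Rouché--Capelli criterion for each.

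First I would write $X = {^1X}e_1 + {^2X}e_2$ for $X \in \C_2^n$ with ${^1X}, {^2X} \in \C_1^n$, which is possible uniquely by the idempotent representation recalled in Definitions \ref{21} and \ref{22}. Using the $\C_1$-algebra structure of $\C_2^n$ together with the orthogonality relations $e_1^2=e_1$, $e_2^2=e_2$, $e_1e_2=0$ (so that $({^1A}e_1+{^2A}e_2)({^1X}e_1+{^2X}e_2)=({^1A}\,{^1X})e_1+({^2A}\,{^2X})e_2$), the equation $A_{m\times n}X_{n\times 1}=B_{m\times 1}$ becomes
\[
({^1A}\,{^1X})e_1+({^2A}\,{^2X})e_2 \;=\; {^1B}e_1+{^2B}e_2.
\]
Uniqueness of the idempotent representation then shows that $X$ solves $AX=B$ if and only if ${^1X}$ solves the complex system ${^1A}\,Y={^1B}$ and ${^2X}$ solves the complex system ${^2A}\,Z={^2B}$.

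An alternative, operator-theoretic route that I would mention is to take $T_A=T_{{^1A}}e_1+T_{{^2A}}e_2$ from Remark \ref{310}. Since $T_A(X)=AX$, solvability of $AX=B$ is equivalent to $B\in Range(T_A)$, and Theorem \ref{215}(2) decomposes $Range(T_A)=Range(T_{{^1A}})\times_e Range(T_{{^2A}})$. Hence $B\in Range(T_A)$ iff ${^1B}\in Range(T_{{^1A}})$ and ${^2B}\in Range(T_{{^2A}})$, which again amounts to simultaneous solvability of the two classical complex systems.

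For the concluding step I would apply the classical Rouché--Capelli theorem over the field $\C_1$: a system $M\,Y=C$ with $M\in\C_1^{m\times n}$, $C\in\C_1^{m}$ is consistent iff $\rho(M\,|\,C)=\rho(M)$. Invoking this separately for ${^1A}\,Y={^1B}$ and ${^2A}\,Z={^2B}$ yields precisely the stated conditions $\rho({^1A}\,|\,{^1B})=\rho({^1A})$ and $\rho({^2A}\,|\,{^2B})=\rho({^2A})$, and reassembling the component solutions via $X={^1X}e_1+{^2X}e_2$ supplies the converse. I do not anticipate a serious obstacle: the only delicate point is the bookkeeping of the idempotent multiplication and the appeal to uniqueness of the idempotent decomposition to match components on both sides, which is routine once $e_1e_2=0$ is used.
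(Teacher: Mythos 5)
Your proposal is correct, and your primary route is more direct than the paper's. You split the matrix equation itself: using $e_1e_2=0$ and the uniqueness of the idempotent representation, $AX=B$ holds if and only if ${^1A}\,{^1X}={^1B}$ and ${^2A}\,{^2X}={^2B}$, and then you invoke the classical Rouch\'e--Capelli criterion over $\C_1$ for each component system; the converse is immediate by reassembling $X={^1X}e_1+{^2X}e_2$ from the two component solutions. The paper instead works entirely through the linear transformation $T_A=T_{{^1A}}e_1+T_{{^2A}}e_2$ of Remark \ref{310}: solvability is rephrased as $B\in Range(T_A)$, the range is identified with the idempotent column space of $A$ via Theorem \ref{37}, membership in $(\mbox{column space of }{^1A})e_1+(\mbox{column space of }{^2A})e_2$ is read off componentwise, and the equivalence between column-space membership and the rank equality of the augmented matrix is then argued by hand rather than cited as Rouch\'e--Capelli. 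Your secondary, operator-theoretic sketch via $Range(T_A)=Range(T_{{^1A}})\times_e Range(T_{{^2A}})$ (Theorem \ref{215}) is essentially the paper's argument with Theorem \ref{215} substituted for Theorem \ref{37}. What your first route buys is brevity and independence from the $T_A$ machinery, at the cost of treating the complex consistency theorem as a black box; what the paper's route buys is self-containment within its idempotent column space framework, which it reuses for the subsequent results on homogeneous systems and solution sets.
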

\begin{proof}
Suppose that $Y = {^{1}Y}e_1+{^{2}Y}e_2 \in \C_{2}^{n}$ is a solution of the system of equations  ${A}_{m\times n} \ {X}_{n\times 1} = {B}_{m\times 1}$. Using \ref{27} and \ref{310}, we have
\begin{eqnarray*}
&& {A}_{m\times n} \ {Y}_{n\times 1} = {B}_{m\times 1}\\
&\Rightarrow& {T}_{A}(Y) = {B}\\
&\Rightarrow& {B} \in Range\left({T}_{A} \right)\\
&\Rightarrow& {B} \in \mbox{Idempotent column space of}\ A\\
&\Rightarrow& {B} \in (\mbox{column space of} \ {^{1}A})e_1 + (\mbox{column space of} \ {^{2}A})e_2\\
&\Rightarrow& {^{1}B} \in \mbox{column space of} \ {^{1}A}\ \ \mbox{and}\ \ {^{2}B} \in \mbox{column space of} \ {^{2}A}.
\end{eqnarray*}
Therefore, ${^{1}B}$ is linearly dependent on the column vectors $ \left({^1{\xi}_{11}}, {^1{\xi}_{21}}, \cdots, {^1{\xi}_{m1}} \right)$, $\left({^1{\xi}_{12}}, {^1{\xi}_{22}}, \cdots, {^1{\xi}_{m2}} \right)$, $\cdots$, $\left({^1{\xi}_{1n}}, {^1{\xi}_{2n}}, \cdots, {^1{\xi}_{mn}} \right)$ of  ${^{1}A}$ and  ${^{2}B}$ is linearly dependent on the column vectors 
 $\left({^2{\xi}_{11}}, {^2{\xi}_{21}}, \cdots, {^2{\xi}_{m1}} \right)$,\\ $\left({^2{\xi}_{12}}, {^2{\xi}_{22}}, \cdots, {^2{\xi}_{m2}} \right)$, $\cdots$, $\left({^2{\xi}_{1n}}, {^2{\xi}_{2n}}, \cdots, {^2{\xi}_{mn}} \right)$ of ${^{2}A}$. Hence, $\rho({^{1}A}\ | \ {^{1}B})$ = $\rho({^{1}A})$ and $\rho({^{2}A}\ | \ {^{2}B})$ = $\rho({^{2}A})$.\\
Converse: Let $\rho({^{1}A}\ | \ {^{1}B})$ = $\rho({^{1}A})$ and $\rho({^{2}A}\ | \ {^{2}B})$ = $\rho({^{2}A})$. Then ${^{1}B}$ is linearly dependent on the column vectors $ \left({^1{\xi}_{11}}, {^1{\xi}_{21}}, \cdots, {^1{\xi}_{m1}} \right)$, $\left({^1{\xi}_{12}}, {^1{\xi}_{22}}, \cdots, {^1{\xi}_{m2}} \right)$, $\cdots$, $\left({^1{\xi}_{1n}}, {^1{\xi}_{2n}}, \cdots, {^1{\xi}_{mn}} \right)$ of ${^{1}A}$ and ${^{2}B}$ is linearly dependent on the column vectors  $\left({^2{\xi}_{11}}, {^2{\xi}_{21}}, \cdots, {^2{\xi}_{m1}} \right)$, $\left({^2{\xi}_{12}}, {^2{\xi}_{22}}, \cdots, {^2{\xi}_{m2}} \right)$, $\cdots$, $\left({^2{\xi}_{1n}}, {^2{\xi}_{2n}}, \cdots, {^2{\xi}_{mn}} \right)$ of ${^{2}A}$. Therefore,
\begin{eqnarray*}
&& {^{1}B} \in \mbox{column space of} \ {^{1}A}\ \ \mbox{and}\ \ {^{2}B} \in \mbox{column space of} \ {^{2}A}\\
&\Rightarrow& {B} \in (\mbox{column space of} \ {^{1}A})e_1 + (\mbox{column space of} \ {^{2}A})e_2\\
&\Rightarrow& {B} \in \mbox{Idempotent column space of}\ A\\
&\Rightarrow& {B} \in Range\left({T}_{A} \right)\\
&\Rightarrow& \mbox{there exists}\ Y \in \C_{2}^{n}\ \mbox{such that}\ {T}_{A}(Y) = {B}\\
&\Rightarrow& {A}_{m\times n} \ {Y}_{n\times 1} = {B}_{m\times 1}.
\end{eqnarray*}
\end{proof}
\begin{theorem}\label{312}
Let $A= {^{1}A} e_1 + {^{2}A} e_2 =[\xi_{ij}]_{m\times n}  \in \C_2^{m\times n}$ and $\ B = {^{1}B} e_1 + {^{2}B} e_2 = (b_{1}, b_{2}, \cdots, b_{m}) \in \C_{2}^{m}$. Then $\rho({^{1}A}\ | \ {^{1}B})$ = $\rho({^{1}A})$ and $\rho({^{2}A}\ | \ {^{2}B})$ = $\rho({^{2}A})$ if and only if $\rho_{mr}({A}\ | \ {B})$ = $\rho_{mr}({A})$.
\end{theorem}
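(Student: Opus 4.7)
The plan is to reduce the statement to an elementary fact about complex matrix rank by exploiting the idempotent decomposition of the augmented matrix. The key observation is that the bicomplex augmented matrix $(A \mid B)$ itself admits the idempotent decomposition
\[
(A \mid B) \;=\; ({^{1}A} \mid {^{1}B})\,e_{1} \;+\; ({^{2}A} \mid {^{2}B})\,e_{2},
\]
so by Definition \ref{31} applied to $(A \mid B)$ we obtain
\[
\rho_{mr}(A \mid B) \;=\; \rho({^{1}A} \mid {^{1}B}) \;+\; \rho({^{2}A} \mid {^{2}B}),
\qquad
\rho_{mr}(A) \;=\; \rho({^{1}A}) \;+\; \rho({^{2}A}).
\]

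With these two identities in hand, the forward implication is immediate: if $\rho({^{1}A} \mid {^{1}B}) = \rho({^{1}A})$ and $\rho({^{2}A} \mid {^{2}B}) = \rho({^{2}A})$, just add the equalities to get $\rho_{mr}(A \mid B) = \rho_{mr}(A)$.

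For the converse, I would invoke the standard (classical complex) fact that adjoining a column to a matrix cannot decrease its rank, so that
\[
\rho({^{1}A} \mid {^{1}B}) \;\geq\; \rho({^{1}A})
\qquad \text{and} \qquad
\rho({^{2}A} \mid {^{2}B}) \;\geq\; \rho({^{2}A}).
\]
If the hypothesis $\rho_{mr}(A \mid B) = \rho_{mr}(A)$ holds, then summing the two inequalities gives an equality; since both inequalities go the same way, each must individually be an equality, yielding $\rho({^{1}A} \mid {^{1}B}) = \rho({^{1}A})$ and $\rho({^{2}A} \mid {^{2}B}) = \rho({^{2}A})$.

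There is essentially no obstacle here beyond recognising that Definition \ref{31} applies just as well to the augmented matrix as to $A$ itself; the proof amounts to unwinding definitions and using the elementary monotonicity of complex rank under column adjunction. Combined with Theorem \ref{311}, this makes $\rho_{mr}(A \mid B) = \rho_{mr}(A)$ a clean, single-invariant consistency criterion for the bicomplex system $A_{m\times n}X_{n\times 1} = B_{m\times 1}$.
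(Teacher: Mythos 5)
Your proposal is correct and follows essentially the same route as the paper: both directions rest on the decomposition $\rho_{mr}(A\mid B)=\rho({^{1}A}\mid{^{1}B})+\rho({^{2}A}\mid{^{2}B})$ from Definition \ref{31} together with the monotonicity of complex rank under adjoining a column. The only cosmetic difference is that the paper also records the upper bounds $\rho({^{i}A}\mid{^{i}B})\le\rho({^{i}A})+1$, which your summing-of-inequalities argument shows are not actually needed.
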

\begin{proof}
Let us suppose that $\rho({^{1}A}\ | \ {^{1}B})$ = $\rho({^{1}A})$ and $\rho({^{2}A}\ | \ {^{2}B})$ = $\rho({^{2}A})$, then using \ref{31}, we have
\begin{eqnarray*}
\rho_{mr}({A}\ | \ {B}) &=& \rho({^{1}A}\ | \ {^{1}B}) + \rho({^{2}A}\ | \ {^{2}B})\\
&=& \rho({^{1}A}) + \rho({^{2}A})\\
&=& \rho_{mr}({A}).
\end{eqnarray*}
Hence, $\rho_{mr}({A}\ | \ {B})$ = $\rho_{mr}({A})$.\\
Converse: Suppose that $\rho_{mr}({A}\ | \ {B})$ = $\rho_{mr}({A})$, $\rho({^{1}A}) = t_{1}$ and $\rho({^{2}A}) = t_{2}$, for $t_{1}\ \mbox{and}\ t_{2} \in \N \cup \left\lbrace 0 \right\rbrace$, then we have 
\begin{eqnarray*}
\rho_{mr}({A}\ | \ {B}) &=& \rho({^{1}A}) + \rho({^{2}A})\\
\Rightarrow \rho({^{1}A}\ | \ {^{1}B}) + \rho({^{2}A}\ | \ {^{2}B}) &=& t_{1} + t_{2}.
\end{eqnarray*}
It is evident that $t_{1} \le \rho({^{1}A}\ | \ {^{1}B}) \le t_{1} + 1$ and $t_{2} \le \rho({^{2}A}\ | \ {^{2}B}) \le t_{2} + 1$. Therefore, $\rho({^{1}A}\ | \ {^{1}B}) = t_{1}$ and $\rho({^{2}A}\ | \ {^{2}B}) = t_{2}$. Hence, $\rho({^{1}A}\ | \ {^{1}B})$ = $\rho({^{1}A})$ and $\rho({^{2}A}\ | \ {^{2}B})$ = $\rho({^{2}A})$.
\end{proof}

The following corollary \ref{313} is immediate consequence of \ref{311} and \ref{312}.
\begin{corollary}\label{313}
Let $A= {^{1}A} e_1 + {^{2}A} e_2 =[\xi_{ij}]_{m\times n}  \in \C_2^{m\times n}, \ B = {^{1}B} e_1 + {^{2}B} e_2 = (b_{1}, b_{2}, \cdots, b_{m}) \in \C_{2}^{m}$  and ${A}_{m\times n} \ {X}_{n\times 1} = {B}_{m\times 1}$ be a system of m equations in n variables $x_{1}, x_{2}, \cdots, x_{n} \in \C_{2}$, i.e. $X = (x_{1}, x_{2}, \cdots, x_{n}) \in \C_{2}^{n}$. Then the following conditions are equivalent:
\begin{enumerate}
\item There exists a solution of the system of equations  ${A}_{m\times n} \ {X}_{n\times 1} = {B}_{m\times 1}$.
\item $\rho({^{1}A}\ | \ {^{1}B})$ = $\rho({^{1}A})$ and $\rho({^{2}A}\ | \ {^{2}B})$ = $\rho({^{2}A})$.
\item $\rho_{mr}({A}\ | \ {B})$ = $\rho_{mr}({A})$.
\end{enumerate}
\end{corollary}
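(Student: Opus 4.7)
The plan is to observe that this corollary is a direct chaining of the two preceding theorems and requires no new construction. Theorem \ref{311} already establishes the equivalence between condition (1) (existence of a solution to $A_{m\times n} X_{n\times 1} = B_{m\times 1}$) and condition (2) (the pair of classical rank conditions $\rho({^{1}A} \mid {^{1}B}) = \rho({^{1}A})$ and $\rho({^{2}A} \mid {^{2}B}) = \rho({^{2}A})$). Theorem \ref{312} establishes the equivalence between condition (2) and condition (3) (the modified-rank condition $\rho_{mr}(A \mid B) = \rho_{mr}(A)$). So the entire content of Corollary \ref{313} is the transitive combination $\mbox{(1)} \Leftrightarrow \mbox{(2)} \Leftrightarrow \mbox{(3)}$.

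First I would write the forward chain: assume (1), invoke Theorem \ref{311} to deduce (2), then invoke Theorem \ref{312} to deduce (3). Then I would run the chain in reverse: assume (3), apply the converse half of Theorem \ref{312} to obtain (2), and apply the converse half of Theorem \ref{311} to obtain (1). Since both referenced results are stated as biconditionals, no additional hypothesis needs to be verified; the hypotheses on $A$ and $B$ in the corollary are identical (in fact weaker, since no explicit system is required for the equivalence of (2) and (3)) to those of Theorems \ref{311} and \ref{312}.

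There is no real obstacle here; the only thing to watch for is making sure the statement is presented as a three-way equivalence rather than two separate biconditionals, so I would write it in the compact form ``Clearly, by Theorems \ref{311} and \ref{312}, we have $(1) \Leftrightarrow (2)$ and $(2) \Leftrightarrow (3)$, hence all three conditions are equivalent.'' No computation is needed, no new notation is introduced, and the proof can be a single sentence or two. This is why the authors rightly label it a corollary and note it as an ``immediate consequence'' of \ref{311} and \ref{312}.
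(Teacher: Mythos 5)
Your proposal is correct and matches the paper exactly: the authors give no separate proof, stating only that Corollary \ref{313} is an immediate consequence of Theorems \ref{311} and \ref{312}, which is precisely the chaining $(1)\Leftrightarrow(2)$ by \ref{311} and $(2)\Leftrightarrow(3)$ by \ref{312} that you describe.
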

\begin{remark}\label{314A}
As the entries of the matrix $A= {^{1}A} e_1 + {^{2}A} e_2 =[\xi_{ij}]_{m\times n}  \in \C_2^{m\times n}$ are not from a field, it is a routine matter to check whether the result given in \ref{214} is valid for $A= {^{1}A} e_1 + {^{2}A} e_2 =[\xi_{ij}]_{m\times n}  \in \C_2^{m\times n}$. Theorem \ref{215} and Remark \ref{310} yield that if $A= {^{1}A} e_1 + {^{2}A} e_2 =[\xi_{ij}]_{m\times n}  \in \C_2^{m\times n}$ and ${A}_{m\times n} \ {X}_{n\times 1} = {0}_{m\times 1}$ is a system of m equations in n variables $x_{1}, x_{2}, \cdots, x_{n} \in \C_{2}$, $i.e.$ $X = (x_{1}, x_{2}, \cdots, x_{n}) \in \C_{2}^{n}$ and ${0}_{m\times 1} \in \C_2^{m\times 1}$ is the null matrix. Then the following conditions are equivalent:
\begin{enumerate}
\item $Y = {^{1}Y}e_1+{^{2}Y}e_2 \in \C_{2}^{n}$ is a solution of the system of equations  ${A}_{m\times n} \ {X}_{n\times 1} = {0}_{m\times 1}$.
\item $Y \in \ker ({T}_{A})$.
\item ${^{1}Y} \in \ker ({T}_{^{1}A})$ and ${^{2}Y} \in \ker ({T}_{^{2}A})$.
\end{enumerate}
Moreover, the set of solution of the system of equations ${A}_{m\times n} \ {X}_{n\times 1} = {0}_{m\times 1}$ is $\ker ({T}_{A}) = \ker ({T}_{^{1}A}) \times_{e} \ker ({T}_{^{2}A}) = \ker ({T}_{^{1}A})e_1 + \ker ({T}_{^{2}A})e_2$. It is evident that the system of equations ${A}_{m\times n} \ {X}_{n\times 1} = {0}_{m\times 1}$ has a unique solution or an infinite number of solutions.
\end{remark}
\begin{theorem}\label{315A}
Let $A= {^{1}A} e_1 + {^{2}A} e_2 =[\xi_{ij}]_{m\times n}  \in \C_2^{m\times n}$ and ${A}_{m\times n} \ {X}_{n\times 1} = {0}_{m\times 1}$ be a system of $m$ equations in $n$ variables $x_{1}, x_{2}, \cdots, x_{n} \in \C_{2}$, i.e. $X = (x_{1}, x_{2}, \cdots, x_{n}) \in \C_{2}^{n}$. Then we have the following:
\begin{enumerate}
	\item If $\rho_{mr}(A) = 2n$, then the system of equations ${A}_{m\times n} \ {X}_{n\times 1} = {0}_{m\times 1}$ has a unique solution.
\item If $\rho_{mr}(A) = k, k \neq 2n$, then the system of equations ${A}_{m\times n} \ {X}_{n\times 1} = {0}_{m\times 1}$ has an infinite number of solutions.
\item If $\rho_{mr}(A) = 0$, then every element of $\C_{2}^{n}$ is a solution of the system of equations ${A}_{m\times n} \ {X}_{n\times 1} = {0}_{m\times 1}$.
\end{enumerate}
\end{theorem}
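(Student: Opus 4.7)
The plan is to reduce everything to classical rank–nullity in $\C_1$ applied separately to the complex components ${^{1}A}$ and ${^{2}A}$, and then reassemble via the idempotent decomposition of $\ker(T_A)$ provided in Remark \ref{314A}. That remark tells us the solution set of ${A}_{m\times n} \ {X}_{n\times 1} = {0}_{m\times 1}$ is exactly
\[
\ker(T_A) = \ker(T_{^{1}A})\,e_1 + \ker(T_{^{2}A})\,e_2,
\]
while Corollary \ref{38} gives $\rho_{mr}(A) = \rho({^{1}A}) + \rho({^{2}A})$. Since ${^{1}A}, {^{2}A} \in \C_1^{m\times n}$ are ordinary complex matrices, I can freely invoke the classical rank–nullity theorem for the $\C_1$-linear maps $T_{^{1}A}, T_{^{2}A}: \C_1^n \to \C_1^m$.

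For part (1), if $\rho_{mr}(A) = 2n$, then $\rho({^{1}A}) + \rho({^{2}A}) = 2n$; since each summand is bounded above by $n$, both must equal $n$. Rank–nullity then forces $\ker(T_{^{1}A}) = \{0\}$ and $\ker(T_{^{2}A}) = \{0\}$, so the idempotent description above yields $\ker(T_A) = \{0\}$, that is, the unique solution is the zero vector.

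For part (2), if $\rho_{mr}(A) = k \neq 2n$, then by Remark \ref{32} we have $k < 2n$, so $\rho({^{1}A}) + \rho({^{2}A}) < 2n$. Thus at least one of $\rho({^{1}A}), \rho({^{2}A})$ is strictly less than $n$, and rank–nullity produces a non-trivial $\C_1$-subspace inside the corresponding kernel; scaling a non-zero vector by the infinitely many elements of $\C_1$ and combining with the other component through $e_1, e_2$ produces infinitely many distinct elements of $\ker(T_A)$. For part (3), if $\rho_{mr}(A) = 0$, then $\rho({^{1}A}) = \rho({^{2}A}) = 0$, hence ${^{1}A} = 0$ and ${^{2}A} = 0$, so $A$ is the zero matrix and $\ker(T_{^{1}A}) = \C_1^n$, $\ker(T_{^{2}A}) = \C_1^n$, giving $\ker(T_A) = \C_2^n$.

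The only non-routine point is observing that the idempotent-sum kernel genuinely captures \emph{all} solutions and that infinitely many elements of one complex kernel translate to infinitely many bicomplex solutions; this is handled by directly citing Remark \ref{314A} (via Theorem \ref{215}), which already does the bookkeeping between the bicomplex kernel and its two complex components. After that, the proof is essentially a case-split on the arithmetic constraint $\rho({^{1}A}) + \rho({^{2}A}) = k$ subject to $0 \le \rho({^{i}A}) \le n$.
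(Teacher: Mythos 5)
Your proposal is correct, and it reaches the same conclusions as the paper by a closely related but not identical route. The paper applies rank--nullity \emph{once}, to the single $\C_1$-linear map $T_A:\C_2^n\to\C_2^m$ on the $2n$-dimensional space $\C_2^n(\C_1)$, obtaining $2n=\rho(T_A)+\eta(T_A)=\rho_{mr}(A)+\eta(T_A)$ via Remark \ref{310}, and then reads off all three cases directly from the value of $\eta(T_A)$. You instead apply classical rank--nullity \emph{twice}, to the complex components $T_{^{1}A}$ and $T_{^{2}A}$ separately, and reassemble through $\ker(T_A)=\ker(T_{^{1}A})e_1+\ker(T_{^{2}A})e_2$ from Remark \ref{314A} and Theorem \ref{215}. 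The paper's argument is more uniform (one identity covers all three cases), while your componentwise version extracts slightly more information along the way: in case (1) it shows both component ranks must equal $n$, and in case (3) it identifies $A$ as the zero matrix. One small point worth making explicit in your case (2): infinitely many elements of, say, $\ker(T_{^{1}A})$ give infinitely many \emph{distinct} bicomplex solutions because the idempotent representation $v\mapsto v e_1 + w e_2$ is injective in each component; you gesture at this and it is indeed covered by the cited bookkeeping, so the gap is only expository.
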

\begin{proof}
Suppose that $A= {^{1}A} e_1 + {^{2}A} e_2 =[\xi_{ij}]_{m\times n}  \in \C_2^{m\times n}$ and ${A}_{m\times n} \ {X}_{n\times 1} = {0}_{m\times 1}$ is a system of $m$ equations in $n$ variables $x_{1}, x_{2}, \cdots, x_{n} \in \C_{2}$, $i.e.$ $X = (x_{1}, x_{2}, \cdots, x_{n}) \in \C_{2}^{n}$. By using \ref{310} and \ref{314A}, we have 
\begin{eqnarray*}
&& {T}_{A}:\C_2^n\to \C_2^m \ \mbox{such that}\ {T}_{A}(X) ={A}_{m\times n} \ {X}_{n\times 1}\ \mbox{and}\ 2n = \rho({T}_{A}) + \eta({T}_{A}),
\end{eqnarray*}
where $\eta({T}_{A})$ represents $dim(\ker ({T}_{A}))$. Therefore, $2n = \rho_{mr}(A) + \eta({T}_{A})$.
\begin{enumerate}
\item 
If $\rho_{mr}(A) = 2n$, then $\eta({T}_{A}) = 0$. Thus,
\begin{eqnarray*}
 \ker ({T}_{A}) = \left\lbrace (0, 0, \cdots, 0) \right\rbrace \subset\C_{2}^{n}.
\end{eqnarray*}
Therefore, the solution of the system of equations ${A}_{m\times n} \ {X}_{n\times 1} = {0}_{m\times 1}$ is only $(0, 0, \cdots, 0) \in\C_{2}^{n}$.
\item 
If $\rho_{mr}(A) = k, k \neq 2n$ then $\eta({T}_{A}) = 2n - k > 0$. Thus,
\begin{eqnarray*}
 \ker ({T}_{A}) \neq \left\lbrace (0, 0, \cdots, 0) \right\rbrace \ \mbox{and has an infinite number of elements}.
\end{eqnarray*}
Therefore, the system of equations ${A}_{m\times n} \ {X}_{n\times 1} = {0}_{m\times 1}$ has an infinite number of solutions.
\item 
If $\rho_{mr}(A) = 0$, then $\eta({T}_{A}) = 2n$. Thus,
\begin{eqnarray*}
 \ker ({T}_{A}) = \C_{2}^{n}.
\end{eqnarray*}
Therefore, every element of $\C_{2}^{n}$ is a solution of the system of equations ${A}_{m\times n} \ {X}_{n\times 1} = {0}_{m\times 1}$.
\end{enumerate}
\end{proof}
\begin{example}
If $A=[\xi_{ij}]_{3\times 5}  \in \C_2^{3\times 5}$ and ${A}_{3\times 5} \ {X}_{5\times 1} = {0}_{3\times 1}$ is a system of three equations in five variables $x_{1}, x_{2}, x_{3}, x_{4}, x_{5} \in \C_{2}$, $i.e.$ $X = (x_{1}, x_{2}, x_{3}, x_{4}, x_{5}) \in \C_{2}^{5}$. By using \ref{32}, $\rho_{mr}(A) \le 6$ $ $i.e.$ $ $\rho_{mr}(A) \neq 10$. Therefore, the system of equations ${A}_{3\times 5} \ {X}_{5\times 1} = {0}_{3\times 1}$ has an infinite number of solutions.
\end{example}
\begin{theorem}\label{315}
Let $A= {^{1}A} e_1 + {^{2}A} e_2 =[\xi_{ij}]_{m\times n}  \in \C_2^{m\times n}, \ B = {^{1}B} e_1 + {^{2}B} e_2 = (b_{1}, b_{2}, \cdots, b_{m}) \in \C_{2}^{m}$  and $S = {^{1}S}e_1+{^{2}S}e_2 \in \C_{2}^{n}$ be a solution of the system ${A}_{m\times n} \ {X}_{n\times 1} = {B}_{m\times 1}$ of $m$ equations in $n$ variables $x_{1}, x_{2}, \cdots, x_{n} \in \C_{2}$, $i.e.$ $X = (x_{1}, x_{2}, \cdots, x_{n}) \in \C_{2}^{n}$. Then $Y = {^{1}Y}e_1+{^{2}Y}e_2 \in \C_{2}^{n}$ is a solution of the system of equations  ${A}_{m\times n} \ {X}_{n\times 1} = {B}_{m\times 1}$ if and only if $Y \in \ker ({T}_{A}) + S$.
\end{theorem}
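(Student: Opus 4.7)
The plan is to reduce the statement to the standard affine description of the solution set of a linear system, but carried out for the bicomplex-valued linear map $T_A$ rather than for a matrix over a field. By Remark~\ref{310}, the map $T_A:\C_2^n\to\C_2^m$ defined by $T_A(X)=AX$ is a well-defined element of $L_1^{nm}\times_e L_1^{nm}$; in particular it is $\C_1$-linear. Thus the equation $AY=B$ is equivalent to $T_A(Y)=B$, and the hypothesis that $S$ is a solution gives $T_A(S)=B$.

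For the forward direction, I would assume $Y$ is a solution and compute $T_A(Y-S)=T_A(Y)-T_A(S)=B-B=0$ by linearity, so $Y-S\in\ker(T_A)$ and hence $Y=(Y-S)+S\in \ker(T_A)+S$. For the converse, I would write $Y=K+S$ with $K\in\ker(T_A)$ and compute $T_A(Y)=T_A(K)+T_A(S)=0+B=B$, so $AY=B$ by Remark~\ref{310} again.

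The only subtlety is that $\C_2$ is not a field, so one cannot invoke the textbook result \ref{214} directly; instead one must work with $T_A$ whose existence and linearity were secured in the definition preceding Remark~\ref{310}, and whose kernel structure is governed by Theorem~\ref{215} and Remark~\ref{314A}. Since all the needed arithmetic takes place inside the additive group $\C_2^n$ and uses only the $\C_1$-linearity of $T_A$ (which does not require $\C_2$ to be a field), there is no real obstacle: the proof is essentially a two-line chase in each direction. The writing task is merely to make clear that the symbol $\ker(T_A)+S$ is to be read as the coset $\{K+S:K\in\ker(T_A)\}\subseteq\C_2^n$, which combined with Remark~\ref{314A} also exhibits the solution set in idempotent form as $\ker(T_{{}^1\!A})e_1+\ker(T_{{}^2\!A})e_2+S$.
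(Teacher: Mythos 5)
Your proposal is correct and follows essentially the same route as the paper: both directions reduce $AY=B$ to $T_A(Y)=B$ via Remark~\ref{310}, use $T_A(S)=B$ and $\C_1$-linearity to show $T_A(Y-S)=0$ for the forward implication, and reverse the computation for the converse. Your closing remark identifying $\ker(T_A)+S$ as the coset $\{K+S:K\in\ker(T_A)\}$ matches the paper's explicit definition of that set at the start of its proof, so there is nothing to add.
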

\begin{proof}
Suppose that $Y = {^{1}Y}e_1+{^{2}Y}e_2 \in \C_{2}^{n}$ is a solution of the system of equations  ${A}_{m\times n} \ {X}_{n\times 1} = {B}_{m\times 1}$ and
\begin{eqnarray*}
\ker ({T}_{A}) + S = \left\lbrace U + S : U \in \ker ({T}_{A})\right\rbrace.
\end{eqnarray*}
Therefore, we have
\begin{eqnarray*}
&& {A}_{m\times n} \ {Y}_{n\times 1} = {B}_{m\times 1}\ \mbox{and}\ {A}_{m\times n} \ {S}_{n\times 1} = {B}_{m\times 1}\\
&\Rightarrow& {T}_{A}(Y) = {B}\ \mbox{and}\ {T}_{A}(S) = {B}\\
&\Rightarrow& {T}_{A}(Y)-{T}_{A}(S) = B - B = 0\\
&\Rightarrow& {T}_{A}(Y - S) = 0\\
&\Rightarrow& (Y - S) \in Ker({T}_{A})\\
&\Rightarrow& Y = U + S,\ \mbox{for some}\ U \in Ker({T}_{A}).
\end{eqnarray*}
Hence, $Y \in \ker ({T}_{A}) + S.$\\
Converse: Let us suppose that, $Y \in \ker ({T}_{A}) + S$. Then,
\begin{eqnarray*}
&& Y = U + S,\ \mbox{for some}\ U \in \ker ({T}_{A})\\
&\Rightarrow& {T}_{A}(Y) = {T}_{A}(U + S)\\
&\Rightarrow& {T}_{A}(Y) = {T}_{A}(U) + {T}_{A}(S)
\end{eqnarray*}
Since,  $U \in \ker ({T}_{A})$ and $S$ is a solution of the system ${A}_{m\times n} \ {X}_{n\times 1} = {B}_{m\times 1}$, therefore
\begin{eqnarray*}
&\Rightarrow& {T}_{A}(Y) = 0 + B\\
&\Rightarrow& {A}_{m\times n} \ {Y}_{n\times 1} = {B}_{m\times 1}.
\end{eqnarray*}
\end{proof}
The following corollary \ref{316} is an immediate consequence of \ref{315}.
\begin{corollary}\label{316}
Let $A= {^{1}A} e_1 + {^{2}A} e_2 =[\xi_{ij}]_{m\times n}  \in \C_2^{m\times n}, \ B = {^{1}B} e_1 + {^{2}B} e_2 = (b_{1}, b_{2}, \cdots, b_{m}) \in \C_{2}^{m}$  and $S = {^{1}S}e_1+{^{2}S}e_2 \in \C_{2}^{n}$ be a solution of the system ${A}_{m\times n} \ {X}_{n\times 1} = {B}_{m\times 1}$ of $m$ equations in $n$ variables $x_{1}, x_{2}, \cdots, x_{n} \in \C_{2}$, $i.e.$ $X = (x_{1}, x_{2}, \cdots, x_{n}) \in \C_{2}^{n}$. Then the set of solution of the system of equations ${A}_{m\times n} \ {X}_{n\times 1} = {B}_{m\times 1}$ is $Ker({T}_{A}) + S$.  
\end{corollary}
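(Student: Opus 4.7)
The plan is to observe that Corollary \ref{316} is a purely set-theoretic reformulation of the biconditional already established in Theorem \ref{315}, so essentially no new work is required beyond recording the two inclusions. I would begin by letting $\mathcal{S} \subseteq \C_2^n$ denote the full solution set of the system ${A}_{m\times n} {X}_{n\times 1} = {B}_{m\times 1}$ and noting that the goal is exactly to prove $\mathcal{S} = \ker(T_A) + S$.

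For the forward inclusion $\mathcal{S} \subseteq \ker(T_A) + S$, I would pick an arbitrary $Y \in \mathcal{S}$, so that by definition $Y$ is a solution of the system; then invoking the ``only if'' direction of Theorem \ref{315} directly yields $Y \in \ker(T_A) + S$. For the reverse inclusion $\ker(T_A) + S \subseteq \mathcal{S}$, I would pick $Y \in \ker(T_A) + S$ and apply the ``if'' direction of Theorem \ref{315} to conclude that $Y$ solves ${A}_{m\times n} {Y}_{n\times 1} = {B}_{m\times 1}$, hence $Y \in \mathcal{S}$. One should also tacitly note that $\mathcal{S}$ is nonempty, which is guaranteed by the hypothesis that the particular solution $S$ exists.

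There is no genuine obstacle here, since the biconditional structure of Theorem \ref{315} matches the set-equality claim one-to-one; the only thing to be careful about is phrasing, namely that $\ker(T_A) + S = \{ U + S : U \in \ker(T_A)\}$ is interpreted as an affine translate inside $\C_2^n$ (consistent with the set defined in the proof of Theorem \ref{315}), and that $T_A$ is well-defined on $\C_2^n$ by Remark \ref{310}. With these conventions in place, the corollary follows by a one-line appeal to Theorem \ref{315} in each direction.
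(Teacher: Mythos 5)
Your proposal is correct and matches the paper exactly: the paper offers no separate argument, simply declaring the corollary an immediate consequence of Theorem \ref{315}, and your two-inclusion unpacking of that biconditional is precisely the intended reasoning. Nothing is missing.
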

\begin{remark}\label{318}
If $A= {^{1}A} e_1 + {^{2}A} e_2 =[\xi_{ij}]_{m\times n}  \in \C_2^{m\times n}, \ B = {^{1}B} e_1 + {^{2}B} e_2 = (b_{1}, b_{2}, \cdots, b_{m}) \in \C_{2}^{m}$  and $S = {^{1}S}e_1+{^{2}S}e_2 \in \C_{2}^{n}$ is a solution of the system ${A}_{m\times n} \ {X}_{n\times 1} = {B}_{m\times 1}$ of $m$ equations in $n$ variables $x_{1}, x_{2}, \cdots, x_{n} \in \C_{2}$, $i.e.$ $X = (x_{1}, x_{2}, \cdots, x_{n}) \in \C_{2}^{n}$, then the map $f:\ker ({T}_{A})\to \ker ({T}_{A}) + S$, which is defined by the rule $f(U) = U + S$ for $U \in \ker ({T}_{A})$, is a one to one map. Thus, the sets $\ker ({T}_{A}) + S$ and $\ker ({T}_{A})$ are similar, $i.e.$ $\ker ({T}_{A}) + S \sim \ker ({T}_{A})$.
\end{remark}
\begin{theorem}
Let $A= {^{1}A} e_1 + {^{2}A} e_2 =[\xi_{ij}]_{m\times n}  \in \C_2^{m\times n}, \ (0, 0, \cdots, 0) \neq B = {^{1}B} e_1 + {^{2}B} e_2 = (b_{1}, b_{2}, \cdots, b_{m}) \in \C_{2}^{m}$  and ${A}_{m\times n} \ {X}_{n\times 1} = {B}_{m\times 1}$ be a system of $m$ equations in $n$ variables $x_{1}, x_{2}, \cdots, x_{n} \in \C_{2}$, $i.e.$ $X = (x_{1}, x_{2}, \cdots, x_{n}) \in \C_{2}^{n}$. Then we have the following:
\begin{enumerate}
\item If $\rho_{mr}(A) = 2n$ and $n < m$, then the system of equations ${A}_{m\times n} \ {X}_{n\times 1} = {B}_{m\times 1}$ has no solution or a unique solution.
\item If $\rho_{mr}(A) = 2m$ and $m < n$, then the system of equations ${A}_{m\times n} \ {X}_{n\times 1} = {B}_{m\times 1}$ has an infinite number of solutions.
\item If $\rho_{mr}(A) = 2m$ and $m = n$, then the system of equations ${A}_{m\times n} \ {X}_{n\times 1} = {B}_{m\times 1}$ has a unique solution.
\item If $\rho_{mr}(A) = k, k \neq 2n$ and $k \neq 2m$, then the system of equations ${A}_{m\times n} \ {X}_{n\times 1} = {B}_{m\times 1}$ has no solution or an infinite number of solutions.
\item If $\rho_{mr}(A) = 0$, then the system of equations ${A}_{m\times n} \ {X}_{n\times 1} = {B}_{m\times 1}$ has no solution.
\end{enumerate}
\end{theorem}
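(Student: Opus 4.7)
The plan is to combine the rank–nullity identity $2n=\rho(T_A)+\eta(T_A)$ with the equality $\rho(T_A)=\rho_{mr}(A)$ from Remark \ref{310} to control the size of $\ker(T_A)$, and then to invoke Corollary \ref{316} to transfer this information to the solution set via $\ker(T_A)+S$. For solvability itself I will use the description $\mathrm{Range}(T_A)=(\text{column space of }{}^1A)\,e_1+(\text{column space of }{}^2A)\,e_2$ from Remark \ref{310}, together with the Rouché–Capelli style criterion of Corollary \ref{313}.

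First I would record two dichotomies that will handle all five parts uniformly. \textbf{Dichotomy A (kernel size).} If $\rho_{mr}(A)=2n$, rank–nullity forces $\eta(T_A)=0$, so $\ker(T_A)=\{(0,\ldots,0)\}$ and by Corollary \ref{316} the solution set $\ker(T_A)+S$, when nonempty, is a singleton. If instead $\rho_{mr}(A)<2n$, then $\ker(T_A)$ is a nonzero $\C_1$-subspace of $\C_2^n$, hence infinite, so by Corollary \ref{316} the solution set, when nonempty, is infinite. \textbf{Dichotomy B (solvability).} If $\rho_{mr}(A)=2m$, then $\rho({}^1A)=\rho({}^2A)=m$ (since each summand is bounded by $m$), so the column spaces of ${}^1A$ and ${}^2A$ exhaust $\C_1^m$; hence $\mathrm{Range}(T_A)=\C_2^m$ and the system is solvable for every $B\in\C_2^m$. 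When $\rho_{mr}(A)\neq 2m$, at least one of $\rho({}^iA)$ falls below $m$, so $\mathrm{Range}(T_A)$ is a proper subspace of $\C_2^m$ and solvability cannot be asserted for arbitrary $B$.

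Each part now follows by a direct combination of the two dichotomies. For part (3), $\rho_{mr}(A)=2m=2n$ gives solvability by B and uniqueness by A. For part (2), $\rho_{mr}(A)=2m<2n$ gives solvability by B and an infinite solution set by A. For part (1), $\rho_{mr}(A)=2n<2m$: Dichotomy A rules out multiple solutions, while Dichotomy B does not apply, so the outcome must be either no solution or a unique solution. For part (4), $k\neq 2n$ and $k\neq 2m$ puts us in the infinite-or-empty branch of A and outside the always-solvable branch of B, yielding no solution or infinitely many. Part (5) is immediate: $\rho_{mr}(A)=0$ forces ${}^1A={}^2A=0$, hence $A=0$, and the hypothesis $B\neq(0,\ldots,0)$ makes $AX=0\neq B$.

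The argument is essentially bookkeeping on top of previously established structural results (Remarks \ref{32}, \ref{310}, \ref{314A} and Corollaries \ref{313}, \ref{316}); there is no hidden computation. The only point that demands care — and the mildest obstacle — is interpreting the ``or'' in parts (1) and (4) as a genuine alternative: one must recognize that Dichotomy A alone cannot produce a solution, so whenever Dichotomy B is unavailable the conclusion must be stated as the disjunction ``no solution or $\ast$ solutions'' rather than as an affirmative existence statement. Once this is registered, each clause is a one-line consequence of the two dichotomies.
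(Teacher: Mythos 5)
Your proposal is correct and follows essentially the same route as the paper: both rest on the rank--nullity identity $2n=\rho(T_A)+\eta(T_A)$ together with $\rho(T_A)=\rho_{mr}(A)$ to control $\ker(T_A)$, use the coset description $\ker(T_A)+S$ of the solution set, and decide solvability by comparing $\rho_{mr}(A)$ with $2m$ to determine whether $T_A$ is onto. Your packaging into two reusable dichotomies is merely a tidier organization of the paper's five-case argument, not a different method.
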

\begin{proof}
Suppose that $A= {^{1}A} e_1 + {^{2}A} e_2 =[\xi_{ij}]_{m\times n}  \in \C_2^{m\times n}, \ (0, 0, \cdots, 0) \neq B = {^{1}B} e_1 + {^{2}B} e_2 = (b_{1}, b_{2}, \cdots, b_{m}) \in \C_{2}^{m}$  and ${A}_{m\times n} \ {X}_{n\times 1} = {B}_{m\times 1} $ is a system of $m$ equations in $n$ variables $x_{1}, x_{2}, \cdots, x_{n} \in \C_{2}$, i.e. $X = (x_{1}, x_{2}, \cdots, x_{n}) \in \C_{2}^{n}$. Using \ref{32}, \ref{310}, \ref{316} and \ref{318}, we have
\begin{eqnarray*}
&& {T}_{A}:\C_2^n\to \C_2^m \ \mbox{such that}\ {T}_{A}(X) = {A}_{m\times n} \ {X}_{n\times 1}\ \mbox{and}\ 2n = \rho({T}_{A}) + \eta({T}_{A}),
\end{eqnarray*}
where $\eta({T}_{A})$ represents $\dim (\ker ({T}_{A}))$.
Therefore, $2n = \rho_{mr}(A) + \eta({T}_{A})$.
\begin{enumerate}
\item 
If $\rho_{mr}(A) = 2n$ and $n < m$, then the linear transformation ${T}_{A}:\C_2^n\to \C_2^m$ is not onto and $\eta({T}_{A}) = 0$. This implies 
\begin{eqnarray*}
	 \ker ({T}_{A}) = \left\lbrace (0, 0, \cdots, 0) \right\rbrace \subset\C_{2}^{n}.
\end{eqnarray*}
There are two cases:
\begin{enumerate}
\item 
If ${B} \in range\left({T}_{A} \right)$, then there exists $S = {^{1}S}e_1+{^{2}S}e_2 \in \C_{2}^{n}$ such that ${T}_{A}(S) = B$, $i.e.$ ${A}_{m\times n} \ {S}_{n\times 1} = {B}_{m\times 1}$ and $|\ker ({T}_{A}) + S| = 1$.
\item 
If ${B} \notin Range\left({T}_{A} \right)$, then ${T}_{A}(S) \neq B$ $i.e.$ ${A}_{m\times n} \ {S}_{n\times 1} \neq {B}_{m\times 1}$, for all $S = {^{1}S}e_1+{^{2}S}e_2 \in \C_{2}^{n}$.
\end{enumerate}
Therefore, the system of equations ${A}_{m\times n} \ {X}_{n\times 1} = {B}_{m\times 1}$ has no solution or a unique solution.
\item 
If $\rho_{mr}(A) = 2m$ and $m < n$, then the linear transformation ${T}_{A}:\C_2^n\to \C_2^m$ is onto and $\eta({T}_{A}) = 2n - 2m > 0$. This implies
$\ker ({T}_{A})$ has an infinite number of elements and ${B} \in Range\left({T}_{A} \right)$. Thus, there exists $S = {^{1}S}e_1+{^{2}S}e_2 \in \C_{2}^{n}$ such that ${T}_{A}(S) = B$, $i.e.$ ${A}_{m\times n} \ {S}_{n\times 1} = {B}_{m\times 1}$ and $\ker ({T}_{A}) + S$ has an infinite number of elements.
Therefore, the system of equations ${A}_{m\times n} \ {X}_{n\times 1} = {B}_{m\times 1}$ has an infinite number of solutions.
\item 
If $\rho_{mr}(A) = 2m$ and $m = n$, then the linear transformation ${T}_{A}:\C_2^n\to \C_2^m$ is onto and $\eta({T}_{A}) = 0$. This implies
\begin{eqnarray*}
 \ker ({T}_{A}) = \left\lbrace (0, 0, \cdots, 0) \right\rbrace \subset\C_{2}^{n}\ \mbox{and}\ {B} \in Range\left({T}_{A} \right).
\end{eqnarray*}
Thus, there exists $S = {^{1}S}e_1+{^{2}S}e_2 \in \C_{2}^{n}$ such that ${T}_{A}(S) = B$, $i.e.$ ${A}_{m\times n} \ {S}_{n\times 1} = {B}_{m\times 1}$ and $|\ker ({T}_{A}) + S| = 1$.
Therefore, the system of equations ${A}_{m\times n} \ {X}_{n\times 1} = {B}_{m\times 1}$ has a unique solution.
\item
If $\rho_{mr}(A) = k, k \neq 2n$ and $k \neq 2m$ then $k < 2n$ and $k < 2m$. Therefore, the linear transformation ${T}_{A}:\C_2^n\to \C_2^m$ is not onto and $\eta({T}_{A}) = 2n - k > 0$. This implies $\ker ({T}_{A})$ has an infinite number of elements.
There are two cases:
\begin{enumerate}
\item 
If ${B} \in range\left({T}_{A} \right)$, then there exists $S = {^{1}S}e_1+{^{2}S}e_2 \in \C_{2}^{n}$ such that ${T}_{A}(S) = B$, $i.e.$ ${A}_{m\times n} \ {S}_{n\times 1} = {B}_{m\times 1}$ and $Ker({T}_{A}) + S$ has an infinite number of elements.
\item 
If ${B} \notin range\left({T}_{A} \right)$, then ${T}_{A}(S) \neq B$ $i.e.$ ${A}_{m\times n} \ {S}_{n\times 1} \neq {B}_{m\times 1}$, for all $S = {^{1}S}e_1+{^{2}S}e_2 \in \C_{2}^{n}$.
\end{enumerate}
Therefore, the system of equations ${A}_{m\times n} \ {X}_{n\times 1} = {B}_{m\times 1}$ has no solution or an infinite number of solutions.
\item 
If $\rho_{mr}(A) = 0$, then $\eta({T}_{A}) = 2n$. This implies
\begin{eqnarray*}
&& \ker ({T}_{A}) = \C_{2}^{n}\ \mbox{and}\ range\left({T}_{A} \right) = \left\lbrace (0, 0, \cdots, 0) \right\rbrace \subset\C_{2}^{m}.\\
&\Rightarrow& {B} \notin range\left({T}_{A} \right),\ i.e.\ {A}_{m\times n} \ {S}_{n\times 1} \neq {B}_{m\times 1},\ \mbox{for all}\ S = {^{1}S}e_1+{^{2}S}e_2 \in \C_{2}^{n}.
\end{eqnarray*}
Therefore, the system of equations ${A}_{m\times n} \ {X}_{n\times 1} = {B}_{m\times 1}$ has no solution.
\end{enumerate}
\end{proof}
\bibliographystyle{unsrt}
\bibliography{references}
\end{document}